\theoremstyle{plain}
\newtheorem{thm}{Theorem}[section]
\newtheorem{lem}[thm]{Lemma}
\newtheorem{prop}[thm]{Proposition}
\theoremstyle{definition}
\newtheorem{defi}[thm]{Definition}
\newtheorem{rem}[thm]{Remark}
\newtheorem{exam}{Example}
\newcommand{\N}{\mathbb N}
\newcommand{\R}{\mathbb R}
\newcommand{\Z}{\mathbb Z}
\newcommand{\K}{\mathcal K}
\newcommand{\I}{\mathrm I}
\newcommand{\nn}{\vskip 0.2cm}
\newcommand{\n}{\vskip 0.1cm}
\begin{document}

\title [\ ] {On lower bounds of the sum of multigraded Betti numbers of simplicial complexes}

\author{Li Yu}
\address{Department of Mathematics, Nanjing University, Nanjing, 210093, P.R.China
  }

 \email{yuli@nju.edu.cn}

  \thanks{2010 \textit{Mathematics Subject Classification}. 13D02; 13A02; 57S25}


\keywords{multigraded Betti number, simplicial complex,
vertex coloring, moment-angle complex}

 \begin{abstract}
   We find some general
   lower bounds of the sum of certain families of multigraded Betti numbers of 
   any simplicial complex with a vertex coloring.
 \end{abstract}

\maketitle

  \section{Introduction}

  An (abstract) \emph{simplicial complex} on $[m]=\{1,\cdots, m\}$ 
  is a collection $\mathcal{K}$ of subsets $\sigma\subseteq [m]$ such that
  if $\sigma\in\mathcal{K}$, then any subset of $\sigma$ also belongs to $\mathcal{K}$.
  Here we always assume that the empty set and 
  every one-element subset 
  $\{j\}_{1\leq j \leq m}$ belongs to $\mathcal{K}$ and refer
  to $\sigma\in \mathcal{K}$ as an abstract \emph{simplex} of $\mathcal{K}$.
  A simplex $\sigma \in \mathcal{K}$ of cardinality $|\sigma|=i+1$ has \emph{dimension} $i$ and is called
  an \emph{$i$-face} of $\mathcal{K}$.  In particular, any $0$-face of $\K$ is 
  called a \emph{vertex} of $\K$. The dimension $\dim(\mathcal{K})$ of $\mathcal{K}$ is defined to be 
  the maximal dimension of all the faces of $\mathcal{K}$.   
   For any subset $\omega\subseteq [m]$, we call 
   $\K|_{\omega} = \{\sigma \in \K \,|\, \sigma\subseteq \omega \}$ the \emph{full 
       subcomplex} of $\K$ corresponding to $\omega$.\n

       Let $\mathbf{k}$ denote an arbitrary field and $\mathbf{k}[\mathbf{v}]=\mathbf{k}[\mathrm{v}_1,\cdots,\mathrm{v}_m]$ be
       the polynomial ring over $\mathbf{k}$ in the $m$ indeterminates $\mathbf{v}=\mathrm{v}_1,\cdots, \mathrm{v}_m$.
       Let $\N =\{0,1,2,\cdots\}$ be the set of nonnegative integers. 
        A \emph{monomial} in $\mathbf{k}[\mathbf{v}]$ is a product $\mathbf{v}^{\mathbf{a}} = 
      \mathrm{v}_1^{a_1} \mathrm{v}_2^{a_2} \ldots \mathrm{v}_m^{a_m}$ for a vector 
      $\mathbf{a} = (a_1, \ldots ,a_m)\in \N^m$ of nonnegative integers.\n

        A $\mathbf{k}[\mathbf{v}]$-module $\mathbf{M}$ is called $\N^m$-graded if 
        $\mathbf{M}=\bigoplus_{\mathbf{b}\in \N^m} \mathbf{M}_{\mathbf{b}}$ and 
        $\mathbf{v}^{\mathbf{a}} \mathbf{M}_{\mathbf{b}} \subseteq \mathbf{M}_{\mathbf{a}+\mathbf{b}} $.      
      Given a vector $\mathbf{a}\in \N^m$, by $\mathbf{k}[\mathbf{v}](-\mathbf{a})$ one denotes the
      free $\mathbf{k}[\mathbf{v}]$-module generated by an element in degree $\mathbf{a}$. So
        $\mathbf{k}[\mathbf{v}](-\mathbf{a})$ is isomorphic to the ideal 
        $\langle\mathbf{v}^{\mathbf{a}}\rangle \subset
        \mathbf{k}[\mathbf{v}]$ as $\N^m$-graded modules. Moreover, any free $\N^m$-graded module of 
        rank $r$ is isomorphic to the direct sum $\mathbf{k}[\mathbf{v}](-\mathbf{a}_1)\oplus
         \cdots\oplus \mathbf{k}[\mathbf{v}](-\mathbf{a}_r)$ for some vectors 
         $\mathbf{a}_1,\cdots, \mathbf{a}_r\in \N^m$.\n

      An ideal $\bm{I}\subseteq \mathbf{k}[\mathbf{v}]$ is called a \emph{monomial ideal}
       if it is generated by monomials.    
       A monomial $\mathbf{v}^{\mathbf{a}}$ is called \emph{squarefree} if every coordinate of $\mathbf{a}$
        is $0$ or $1$. A monomial ideal is called squarefree if it is generated by squarefree monomials.  \n

     Clearly, all the elements in $2^{[m]} =\{ \omega\,|\, \omega\subseteq [m]\}$ bijectively correspond to all the vectors in $\{ 0 ,1\}^m \subset \N^m$
     by mapping any $\omega \in 2^{[m]}$ to $\mathbf{a}_{\omega} \in \{ 0 ,1\}^m $, where the $j$-th coordinate
     $\mathbf{a}_{\omega}$ is $1$ whenever $j\in \omega$, and is $0$ otherwise. With this understanding,
     we write 
    \begin{equation} \label{Equ:Notation-V}
      \mathbf{v}^{\omega} = \mathbf{v}^{\mathbf{a}_{\omega}}  = \prod_{j\in \omega} \mathrm{v}_j,\ \omega\subseteq [m] \quad \text{(e.g. $\mathbf{v}^{\{ j \}} = \mathrm{v}_j$)}.
      \end{equation}
   
     The \emph{Stanley-Reisner ideal} of a simplicial complex $\mathcal{K}$ on $[m]$ 
     is the squarefree monomial ideal $\bm{I}_{\mathcal{K}} = \langle \mathbf{v}^{\tau} \,|\, 
     \tau\notin \mathcal{K} \rangle$ generated by monomials in $\mathbf{k}[\mathbf{v}]$
      corresponding to nonfaces of $\mathcal{K}$.
     The quotient ring $\mathbf{k}(\mathcal{K})=\mathbf{k}[\mathbf{v}]\slash \bm{I}_{\mathcal{K}}$
     is called the \emph{Stanley-Reisner ring} of $\mathcal{K}$. By~\cite[Sect.1.4]{MilStu05},
     $\mathbf{k}(\mathcal{K})$ is a finitely generated $\N^m$-graded $\mathbf{k}[\mathbf{v}]$-module and 
     so $\mathbf{k}(\mathcal{K})$ has an $\N^m$-graded minimal free resolution as follows:
     \begin{equation} \label{Equ:Free-Resol}
       0 \longrightarrow  F_{-h} \overset{ \phi_h}{\longrightarrow} 
       F_{-h+1} \longrightarrow \cdots F_{-1} \overset{ \phi_1}{\longrightarrow}
       F_0 \longrightarrow    \mathbf{k}(\mathcal{K}) \longrightarrow  0 
     \end{equation} 
     where each $\phi_i$ is a degree-preserving homomorphism.
     Here we numerate the terms of a free resolution by nonpositive integers in order to view it as
     a cochain complex.
     Since each $F_{-i}$ is an $\N^m$-graded free $\mathbf{k}[\mathbf{v}]$-module,
     we may write 
     $$F_{-i} = \bigoplus_{\mathbf{a}\in \N^m} \mathbf{k}[\mathbf{v}](-\mathbf{a})^{\beta^{\mathbf{k}(\mathcal{K})}_{i,\mathbf{a}}}, \ i\in \N.$$
     The integers $\beta^{\mathbf{k}(\mathcal{K})}_{i,\mathbf{a}}$ are called the 
     \emph{multigraded Betti numbers} of 
     $\mathcal{K}$ with $\mathbf{k}$-coefficients.
     Since the free resolution~\eqref{Equ:Free-Resol} is minimal, we have $\mathrm{Tor}^{\mathbf{k}[\mathbf{v}]}_i (\mathbf{k}(\mathcal{K}),\mathbf{k}) \cong F_{-i}$ and
     \begin{equation}
      \beta^{\mathbf{k}(\mathcal{K})}_{i,\mathbf{a}} = 
      \dim_{\mathbf{k}} \mathrm{Tor}^{\mathbf{k}[\mathbf{v}]}_i (\mathbf{k}(\mathcal{K}),\mathbf{k})_{\mathbf{a}},
      \, \mathbf{a}\in \N^m, i\in \N.
      \end{equation}
     By~\cite[Corollary 1.40]{MilStu05}, $\beta^{\mathbf{k}(\mathcal{K})}_{i,\mathbf{a}} =0$ 
       for all $\mathbf{a}\notin \{ 0,1\}^m$. For brevity, we define
       $$ \beta^{\mathbf{k}(\mathcal{K})}_{i,\omega} = \beta^{\mathbf{k}(\mathcal{K})}_{i,\mathbf{a}_{\omega}},\
         \omega\in 2^{[m]}. $$ 
        In addition, the Hochster's formula tells us that
       (see~\cite{Hoc77} or~\cite[Corollary 5.12]{MilStu05})
     \begin{equation} \label{Equ:Hochster}
       \beta^{\mathbf{k}(\mathcal{K})}_{i,\omega} 
       = \dim_{\mathbf{k}}  \widetilde{H}^{|\omega|-i-1}(\K|_{\omega};\mathbf{k}),\ i\in \N.
     \end{equation}
      where $\widetilde{H}^{*}(\K|_{\omega};\mathbf{k})$ is the reduced singular (or simplicial) cohomology of $\K|_{\omega}$
     with $\mathbf{k}$-coefficients. Notice that $\dim(\mathcal{K}|_{\omega}) \leq
       |\omega|-1$.
     So by the Hochster's formula, the multigraded Betti numbers of $\mathcal{K}$ over $\mathbf{k}$ are nothing but the usual
     (reduced) Betti numbers with $\mathbf{k}$-coefficients of all the full subcomplexes of $\mathcal{K}$.\n
     
   The multigraded Betti numbers of $\mathcal{K}$ are intimately related to a topological space 
     $\mathcal{Z}_{\mathcal{K}}$ called the \emph{moment-angle complex of $\mathcal{K}$}.  The construction of $\mathcal{Z}_{\mathcal{K}}$ first appeared
     in Davis-Januszkiewicz~\cite{DaJan91}.
     Let $D^2 = \{ z\in \mathbb{C}\, |\, |z|\leq 1 \}$ be 
     the unit $2$-disk in $\mathbb{C}$ and $S^1 =\partial D^2$ be the unit circle.
    By definition, 
     $\mathcal{Z}_{\mathcal{K}}$ is a subspace of the Cartisian product of $m$ copies of 
     $2$-disks:
      \begin{equation} \label{Equ:Z-K}
       \mathcal{Z}_{\mathcal{K}}  = \bigcup_{\sigma\in\mathcal{K}} \big( \prod_{j\in \sigma} D^2_{(j)} \times \prod_{j\notin \sigma} S^1_{(j)} \big) \subseteq \prod^m_{j=1} D^2_{(j)}
       \end{equation}
      where $ D^2_{(j)}$ is a copy of $D^2$ indexed by $j\in[m]$ and $S^1_{(j)} =\partial  D^2_{(j)}$. In addition, 
      we consider
      the index $j \in [m]$ in~\eqref{Equ:Z-K} to be increasing from the left to the right.
     People also use $(D^2,S^1)^{\mathcal{K}}$ to denote $\mathcal{Z}_{\mathcal{K}}$ in the literature and call it the \emph{polyhedral product} of $(D^2,S^1)$ corresponding to $\K$ (see~\cite[Sec.4.2]{BP15}).
      It is shown in~\cite{BasKov02} that
     \begin{equation} \label{Equ:BasKov}
        H^{q}(\mathcal{Z}_{\mathcal{K}};\mathbf{k}) \cong 
       \bigoplus_{\omega\subseteq [m]} \widetilde{H}^{q-|\omega|-1} (\mathcal{K}|_{\omega};\mathbf{k}), \,
        q\in \N.
     \end{equation}
     So by the Hochster's formula~\eqref{Equ:Hochster}, we have
    $
          \dim_{\mathbf{k}} H^{q}(\mathcal{Z}_{\mathcal{K}};\mathbf{k}) =
           \sum_{\omega\subseteq [m]} \beta^{\mathbf{k}(\mathcal{K})}_{2|\omega|-q,\omega}$.\n
         
     If we consider $S^1$ as a Lie group with respect to the multiplication of complex numbers,
     there is a canonical action of the torus $T^m=(S^1)^m$ on $\mathcal{Z}_{\mathcal{K}}$ where
     the $j$-th $S^1$-factor of $T^m$ acts on $D^2_{(j)}$ by multiplication of complex numbers.
      
     \nn
     
     The following inequality on the multigraded Betti numbers of $\mathcal{K}$ is obtained 
     by Cao-L\"u~\cite{CaoLu12} and Ustinovskii~\cite{Uto09} independently.
     
     \begin{thm}[Theorem 1.4 in~\cite{CaoLu12}, Theorem 3.2 in~\cite{Uto09}] \label{Thm:CaoLu-Ustinov}
     For a simplicial complex $\mathcal{K}$ on $[m]$ and any field $\mathbf{k}$,
      $\sum_{q} \dim_{\mathbf{k}} H^q(\mathcal{Z}_{\mathcal{K}};\mathbf{k}) \geq 2^{m-\dim(\mathcal{K})-1}$. 
     \end{thm}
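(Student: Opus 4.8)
The plan is to prove the lower bound by induction on $m$, the number of vertices, using the fact that the Betti numbers of $\K$ are the reduced cohomology dimensions of all its full subcomplexes (Hochster's formula). Writing $b(\K) = \sum_q \dim_{\mathbf k} H^q(\mathcal Z_\K;\mathbf k) = \sum_{\omega\subseteq[m]} \dim_{\mathbf k}\widetilde H^*(\K|_\omega;\mathbf k)$, I want to show $b(\K)\geq 2^{m-\dim(\K)-1}$. The base case is essentially when $\K$ is the full simplex $\Delta^{m-1}$ on $[m]$, where $\dim(\K)=m-1$ and $b(\K)=1=2^0$, since every full subcomplex is a simplex and hence acyclic except that $\widetilde H^{-1}$ of the empty complex contributes $1$ when $\omega=\varnothing$. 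More generally one should handle directly the case $\dim(\K)=m-1$.

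For the inductive step, I would pick a vertex $j\in[m]$ that is \emph{not} a cone point (if every vertex is a cone point then $\K$ is a simplex and we are in the base case), and compare $\K$ with the two smaller complexes $\K_1 = \K|_{[m]\setminus\{j\}}$ (the deletion) and $\K_2 = \mathrm{lk}_\K(j)$ or rather the restriction relevant to the Mayer--Vietoris/long-exact-sequence bookkeeping. The cleaner route, used by Ustinovskii, is to split the index set $2^{[m]}$ according to whether $j\in\omega$: those $\omega$ with $j\notin\omega$ give full subcomplexes of $\K_1$, contributing $\sum_{\omega\subseteq[m]\setminus\{j\}}\dim_{\mathbf k}\widetilde H^*(\K|_\omega;\mathbf k) = b(\K_1)$; those with $j\in\omega$ require relating $\widetilde H^*(\K|_\omega)$ to the cohomology of $\K|_{\omega\setminus\{j\}}$ and of $\mathrm{lk}_{\K|_\omega}(j)$ via the long exact sequence of the pair, or equivalently to the cofiber. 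The key point is that since $j$ is not a cone point of $\K$, one can show $b(\{\text{terms with }j\in\omega\}) \geq b(\K'')$ for an appropriate complex $\K''$ on $[m]\setminus\{j\}$ whose dimension is at most $\dim(\K)$, and crucially $\dim(\K_1)\leq \dim(\K)$ as well.

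Applying the inductive hypothesis to both pieces, each on $m-1$ vertices with dimension at most $\dim(\K)$, would give a contribution of at least $2^{(m-1)-\dim(\K)-1}$ from each, so in total at least $2\cdot 2^{m-\dim(\K)-2} = 2^{m-\dim(\K)-1}$, completing the induction. The delicate bookkeeping is to make sure that (a) one of the two smaller complexes genuinely has dimension $\leq\dim(\K)$ and not $\dim(\K)+1$ somewhere, and (b) the cohomological contributions really do split additively as claimed — this is where the long exact sequence gives only an inequality $\dim\widetilde H^*(\text{total})\geq \dim\widetilde H^*(\text{sub}) + \dim\widetilde H^*(\text{quotient})$ or the reverse, so one must set up the Mayer--Vietoris / cofiber sequence in the direction that yields the needed inequality.

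The main obstacle I expect is choosing the vertex $j$ and the auxiliary complexes so that both the dimension bound and the additive inequality on Betti number sums hold simultaneously; the cleanest formulation is probably to run the induction not on $\K$ alone but on the pair $(m,\dim\K)$ and to use that if $j$ is not a cone point then $\mathrm{lk}_\K(j)\neq \K|_{[m]\setminus\{j\}}$, which forces a genuine contribution. An alternative, possibly slicker, approach avoiding delicate sign choices is to use the topological model directly: $\mathcal Z_\K$ retracts/fibers appropriately and $\dim_{\mathbf k} H^*(\mathcal Z_\K)$ is multiplicative under the operation of adding a cone point (which multiplies by $2$ the relevant count via a suspension/join argument, since coning corresponds to $\mathcal Z_{\K*\{pt\}} = \mathcal Z_\K\times D^2 \simeq \mathcal Z_\K$ — actually one wants the opposite: removing a non-cone vertex), so I would verify that passing from any $\K$ to $\K$ with one extra vertex that is not a cone multiplies $b$ by at least $2$ while raising $\dim$ by at most... — in any case the extremal configuration is the simplex, and tracking how far $\K$ is from a simplex via the codimension $m-\dim(\K)-1$ is exactly the right induction parameter.
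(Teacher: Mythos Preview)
Your proposal is an outline rather than a proof, and you yourself flag the unresolved step: after splitting $b(\K)$ into the contribution from $\omega\not\ni j$ (which is exactly $b(\K_1)$ with $\K_1=\K|_{[m]\setminus\{j\}}$) and the contribution from $\omega\ni j$, you need a \emph{lower} bound on the latter. But the long exact sequence of the pair, or Mayer--Vietoris with the star of $j$, gives for each $\omega\ni j$ only
\[
\dim_{\mathbf k}\widetilde H^{*}(\K|_{\omega};\mathbf k)\ \le\ \dim_{\mathbf k}\widetilde H^{*}(\K|_{\omega\setminus\{j\}};\mathbf k)+\dim_{\mathbf k}\widetilde H^{*}\big(\mathrm{lk}_{\K|_{\omega}}(j);\mathbf k\big),
\]
i.e.\ the inequality points the wrong way. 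You note this (``one must set up the Mayer--Vietoris/cofiber sequence in the direction that yields the needed inequality'') but never say how, and there is no free fix: the connecting maps can kill classes on either side, so neither orientation of the triangle yields a lower bound on $\dim\widetilde H^{*}(\K|_{\omega})$ by itself. The closing paragraph about cone points and ``multiplying $b$ by at least $2$'' does not close this gap either; as you observe, adding a cone vertex leaves $\mathcal Z_{\K}$ unchanged up to homotopy, so that operation cannot drive the induction.

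The paper does not give its own direct proof of Theorem~\ref{Thm:CaoLu-Ustinov}; it is quoted from \cite{CaoLu12,Uto09}. However, the paper's Theorem~\ref{Thm:Main-1}, specialized to the trivial partition $\upalpha=\{\{1\},\dots,\{m\}\}$, yields the finer Theorem~\ref{Thm:Uto12}, and summing over $i$ recovers Theorem~\ref{Thm:CaoLu-Ustinov}. That argument is entirely different from your inductive sketch: one identifies $\sum_{\omega}\beta^{\mathbf k(\K)}_{q,\omega}$ with $\dim_{\mathbf k}\mathrm{Tor}^{\mathbf k[\mathrm u_1,\dots,\mathrm u_r]}_{q}(\mathbf k,\mathbf k(\K))$ via the Koszul complex (Proposition~\ref{prop:Cohomology-Isomor}), checks that the Krull dimension of $\mathbf k(\K)$ as a $\mathbf k[\mathrm u_1,\dots,\mathrm u_r]$-module is $\dim(\K)+1$, and then invokes Charalambous's theorem (Theorem~\ref{Thm:Cha}) that any multigraded module of Krull dimension $s$ has $q$-th Betti number at least $\binom{r-s}{q}$. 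There is no induction on $m$, no deletion/link, and no long exact sequence; the inequality is imported wholesale from commutative algebra. If you want to pursue an inductive proof along your lines, you would be reconstructing the arguments of \cite{CaoLu12} or \cite{Uto09} rather than the present paper's, and you would need a genuinely new idea to control the $\omega\ni j$ block from below.
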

     By~\eqref{Equ:BasKov}, Theorem~\ref{Thm:CaoLu-Ustinov} is equivalent to saying that
      the sum of all the multigraded Betti numbers of $\mathcal{K}$ is at least $2^{m-\dim(\mathcal{K})-1}$:
      \begin{equation} \label{Equ:Bound-1}
         \sum_{i} \sum_{\omega\subseteq [m]} 
         \beta^{\mathbf{k}(\mathcal{K})}_{i,\omega} \geq 2^{m-\dim(\mathcal{K})-1}.  
         \end{equation}
         By Hochster's formula, the inequality~\eqref{Equ:Bound-1} 
        is equivalent to saying that the sum of all the usual Betti numbers of all 
       the full subcomplexes of $\K$ is at least $2^{m-\dim(\mathcal{K})-1}$:
       \begin{equation}
         \sum_{\omega\subseteq [m]} \Big(  \sum_j 
       \dim_{\mathbf{k}}  \widetilde{H}^{j}(\K|_{\omega};\mathbf{k}) \Big) \geq 2^{m-\dim(\mathcal{K})-1}.
       \end{equation}
     The lower bound in~\eqref{Equ:Bound-1} is sharp since the equality holds for $\partial\Delta^{n_1}*\cdots*\partial\Delta^{n_s}$ where $\partial\Delta^{n}$ is the boundary of an $n$-simplex $\Delta^{n}$ and $*$ is the join of two spaces.\n
     
     Moreover, in~\cite{Uto12} Ustinovskii shows that the inequality~\eqref{Equ:Bound-1} can be strengthened to be the following.
    \begin{thm}[Corollary 3.6 in~\cite{Uto12}] \label{Thm:Uto12}
      For a simplicial complex $\mathcal{K}$ on $[m]$ and any field $\mathbf{k}$, 
        $$\sum_{\omega\subseteq [m]} \beta^{\mathbf{k}(\mathcal{K})}_{i,\omega} \geq \binom{m-\dim(\mathcal{K})-1}{i}\ \text{for any}\ i\geq 0.$$
     \end{thm}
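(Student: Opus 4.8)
The plan is to translate the inequality into a lower bound for the Betti numbers of the moment-angle complex $\mathcal{Z}_{\mathcal{K}}$ and to exploit a free torus action on it. Write $n=\dim(\mathcal{K})$ and $c=m-n-1$. The quantity $\sum_{\omega}\beta^{\mathbf{k}(\mathcal{K})}_{i,\omega}=\dim_{\mathbf{k}}\mathrm{Tor}^{\mathbf{k}[\mathbf{v}]}_{i}(\mathbf{k}(\mathcal{K}),\mathbf{k})$ is the rank of the free module $F_{-i}$ in the minimal resolution~\eqref{Equ:Free-Resol}; comparing Hochster's formula~\eqref{Equ:Hochster} with~\eqref{Equ:BasKov} shows that this number equals the dimension of the ``homological weight $i$'' summand $H^{*}(\mathcal{Z}_{\mathcal{K}};\mathbf{k})_{[i]}$ of $H^{*}(\mathcal{Z}_{\mathcal{K}})$, where the weight grading is the one recording the homological degree of $\mathrm{Tor}$ (concretely, the summand $\widetilde{H}^{q-|\omega|-1}(\mathcal{K}|_{\omega})$ of $H^{q}(\mathcal{Z}_{\mathcal{K}})$ has weight $2|\omega|-q$). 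So I must show $\dim_{\mathbf{k}}H^{*}(\mathcal{Z}_{\mathcal{K}})_{[i]}\ge\binom{c}{i}$. The geometric input is that $\mathcal{Z}_{\mathcal{K}}$ admits a \emph{free} action of a torus $T^{c}$: every isotropy subgroup of the coordinate $T^{m}$-action is a coordinate subtorus $T^{\tau}$ with $\tau\in\mathcal{K}$, hence of rank $\le n+1=m-c$, and a rank-$c$ subtorus in general position meets each such $T^{\tau}$ trivially; in particular $\mathcal{Z}_{\mathcal{K}}/T^{c}$ is finite-dimensional. (If $c=0$, i.e.\ $\mathcal{K}$ is a full simplex, the estimate is trivial.)

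The inductive engine is the Gysin sequence, applied one circle at a time. Fix a flag $0\subset T^{(1)}\subset\cdots\subset T^{(c)}=T^{c}$ with $\dim T^{(j)}=j$ and set $Y_{j}=\mathcal{Z}_{\mathcal{K}}/T^{(j)}$, so that $\mathcal{Z}_{\mathcal{K}}=Y_{0}\to Y_{1}\to\cdots\to Y_{c}$ is a tower of principal $S^{1}$-bundles over finite-dimensional spaces, each $Y_{j}$ carrying a free residual $T^{c-j}$-action. The Euler class $\varepsilon_{j}\in H^{2}(Y_{j};\mathbf{k})$ of $Y_{j-1}\to Y_{j}$ is nilpotent ($Y_{j}$ being finite-dimensional) and of weight $0$ (it comes from $H^{*}(BS^{1})$, the ``polynomial side'' of the grading). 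The Gysin sequence of $Y_{j-1}\to Y_{j}$ is compatible with the weight grading, with $\pi^{*}$ and $\cup\varepsilon_{j}$ of weight $0$ and the fibre-integration map of weight $-1$; since $\varepsilon_{j}$ is a weight-preserving degree-$2$ endomorphism of the finite-dimensional space $H^{*}(Y_{j})_{[i]}$, there $\dim\ker=\dim\operatorname{coker}$, and the sequence collapses to the recursion
\[
\dim_{\mathbf{k}}H^{*}(Y_{j-1})_{[i]}=\kappa^{(j)}_{i}+\kappa^{(j)}_{i-1},\qquad \kappa^{(j)}_{i}:=\dim_{\mathbf{k}}\ker\!\big(\varepsilon_{j}\colon H^{*}(Y_{j})_{[i]}\to H^{*+2}(Y_{j})_{[i]}\big).
\]

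The heart of the argument is to feed this recursion back into itself, and this is where care is needed: a \emph{naive} induction on the bound $\dim H^{*}(Y_{j})_{[i]}\ge\binom{c-j}{i}$ does not close, because the recursion consumes $\kappa^{(j)}_{i}$, which is only $\le\dim H^{*}(Y_{j})_{[i]}$. One therefore proves the stronger statement, by induction on $r$: \emph{for every finite-dimensional space $Y$ with a free $T^{r}$-action and the inherited weight grading (bounded below by $0$, with $H^{*}(Y)_{[0]}\ne 0$), and every weight-$0$ class $\eta\in H^{2}(Y)$, one has $\dim_{\mathbf{k}}\ker(\eta\colon H^{*}(Y)_{[i]}\to H^{*+2}(Y)_{[i]})\ge\binom{r}{i}$.} The base $r=0$ is immediate: $\eta$ is nilpotent on the nonzero space $H^{*}(Y)_{[0]}$, so its kernel there is nonzero, and for $i\ge 1$ there is nothing to prove. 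For the step, quotient $Y$ by a circle to get $Y'$ (free $T^{r-1}$, Euler class $\varepsilon'$); using that all relevant degree-$2$ classes may be taken in $H^{2}(Y/T^{r})$ and hence pulled back, one may assume $\eta=\pi^{*}\bar\eta$. Through the Gysin identification of $H^{*}(Y)$ with the Koszul homology of $\varepsilon'$ acting on $H^{*}(Y')$, the action of $\eta$ becomes that of $\bar\eta$, and $H^{*}(Y)$ lies in an extension of $\mathbf{k}[\bar\eta]$-modules $0\to\operatorname{coker}(\varepsilon')\to H^{*}(Y)\to\ker(\varepsilon')[+1]\to 0$. One then applies the inductive hypothesis on $Y'$ (rank $r-1$) to each of $\operatorname{coker}(\varepsilon')$ and $\ker(\varepsilon')$ — each being a quotient, resp.\ submodule, of $H^{*}(Y')$ by/through the nilpotent class $\varepsilon'$, hence again finite-dimensional, of weights $\ge 0$, and nonvanishing in weight $0$ — and combines the two estimates by Pascal's identity $\binom{r}{i}=\binom{r-1}{i}+\binom{r-1}{i-1}$, the weight shift on $\ker(\varepsilon')$ accounting precisely for the index shift $i\mapsto i-1$. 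Finally, taking $Y=\mathcal{Z}_{\mathcal{K}}$, $r=c$ and $\eta=0$ (so $\ker(\eta)$ is all of $H^{*}(\mathcal{Z}_{\mathcal{K}})_{[i]}$) gives $\dim_{\mathbf{k}}H^{*}(\mathcal{Z}_{\mathcal{K}})_{[i]}\ge\binom{c}{i}$, which is the assertion.

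The main obstacle I expect is exactly this last piece of bookkeeping: propagating the hypotheses through the extension $0\to\operatorname{coker}(\varepsilon')\to H^{*}(Y)\to\ker(\varepsilon')[+1]\to 0$ and showing that $\dim\ker(\eta\colon-)$ of the middle term is at least the sum of those of the outer terms. This is \emph{not} formal, since $\ker(\eta\colon-)$ is not additive along short exact sequences; one must run the long exact $\mathrm{Tor}^{\mathbf{k}[\bar\eta]}$-sequence and use the precise vanishing/nonvanishing of the weight-$0$ pieces. Equivalently, one can recast the whole induction purely algebraically, in terms of the finite-length bigraded $\mathbf{k}[t_{1},\dots,t_{c}]$-module $H^{*}(\mathcal{Z}_{\mathcal{K}}/T^{c})$ (the module of Euler classes) and its iterated Koszul homologies; the subtlety is the same. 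It is worth emphasising that the \emph{exactness} of the Gysin sequence is essential here: an argument using only spectral sequences would at best bound the ranks $\beta_{i}$ of a perfect complex with finite-length homology over $\mathbf{k}[t_{1},\dots,t_{c}]$ below by $\binom{c}{i}$, which is the Buchsbaum–Eisenbud–Horrocks conjecture and is open in general — so the topology (a free torus action on an honest finite complex) is doing genuine work.
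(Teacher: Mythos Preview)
Your approach is genuinely different from the paper's, and it has two real gaps.

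\textbf{What the paper does.} The paper (following Ustinovskii) observes that $\sum_{\omega}\beta^{\mathbf{k}(\mathcal{K})}_{i,\omega}=\dim_{\mathbf{k}}\mathrm{Tor}^{\mathbf{k}[\mathbf{v}]}_{i}(\mathbf{k}(\mathcal{K}),\mathbf{k})$, that $\mathbf{k}(\mathcal{K})$ is a \emph{multigraded} $\mathbf{k}[\mathbf{v}]$-module of Krull dimension $\dim(\mathcal{K})+1$, and then simply quotes the theorem of Charalambous (generalising Evans--Griffith): for any multigraded $\mathbf{k}[\mathrm{u}_1,\dots,\mathrm{u}_r]$-module $M$ of Krull dimension $s$, $\dim_{\mathbf{k}}\mathrm{Tor}_i(\mathbf{k},M)\geq\binom{r-s}{i}$. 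No topology is needed beyond the identification of the Betti sums with $\mathrm{Tor}$.

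\textbf{First gap: the free action need not exist.} Your ``rank-$c$ subtorus in general position'' acts \emph{almost} freely, not freely: transversality of Lie algebras only forces finite isotropy. Freeness is equivalent to the existence of a map $\pi:\mathbb{Z}^m\to\mathbb{Z}^{n+1}$ such that $\{\pi(e_j):j\in\tau\}$ extends to a $\mathbb{Z}$-basis for every $\tau\in\mathcal{K}$ --- the characteristic-function condition from toric topology --- and this fails already for $\mathcal{K}=K_4$ (complete graph on four vertices, $n=1$, $c=2$): one checks directly that no four primitive vectors in $\mathbb{Z}^2$ are pairwise unimodular. With only an almost free action your Gysin sequences are valid over $\mathbb{Q}$ but not over arbitrary $\mathbf{k}$, so at best you would recover the statement in characteristic zero.

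\textbf{Second gap: the induction does not close.} You state the inductive hypothesis for \emph{spaces} $Y$ with a free $T^{r}$-action, but in the inductive step you need it for the subquotients $\ker(\varepsilon')$ and $\mathrm{coker}(\varepsilon')$ of $H^*(Y')$, which are not presented as cohomologies of any space with a free $T^{r-1}$-action. If instead you recast the hypothesis purely algebraically (as a statement about finite-length $\mathbf{k}[t_1,\dots,t_r]$-modules), then you are precisely reproving Charalambous's theorem, and the step ``$\dim\ker(\eta)$ on an extension is at least the sum on the pieces'' is exactly the nontrivial content of that proof. Your own caveat --- that $\ker(\eta\colon-)$ is not additive along short exact sequences and one must ``run the long exact $\mathrm{Tor}$-sequence and use the precise vanishing of the weight-$0$ pieces'' --- names the difficulty but does not resolve it; nothing in the weight-$0$ information you have recorded forces the connecting map to vanish. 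So as written the argument is circular: it reduces the theorem to the algebraic lemma that the paper simply cites.
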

     The proof of this theorem in~\cite{Uto12}
      uses a result in~\cite[Corollary 2.5]{EvanGriff88} on the dimensions
     of the Tor modules of monomial ideals in $\mathbf{k[v]}$.
\n
     
      In this paper, we obtain some lower bounds of the sum of certain families of multigraded 
     Betti numbers of $\mathcal{K}$, which generalizes Theorem~\ref{Thm:Uto12}. To state our main result,
     let us introduce some notations.\n
     
     \begin{itemize}
     \item Let $\upalpha=\{\alpha_1,\cdots, \alpha_r\}$ 
       be a \emph{partition} of $[m]$ into $r$ subsets, i.e. $\alpha_i$'s are disjoint nonempty subsets of $[m]$
       with $\alpha_1 \cup\cdots \cup \alpha_r=[m]$.
       The partition $\upalpha$ is called \emph{nondegenerate for $\mathcal{K}$} if 
           the two vertices of any $1$-simplex of $\mathcal{K}$ belong to different $\alpha_i$. For example the \emph{trivial partition} $\{ \{1\},\cdots, \{m\} \}$ of $[m]$ is nondegenerate for any simplicial complex on $[m]$. We also refer to a nondegenerate partition $\upalpha$ of $[m]$ for $\K$ as a \emph{vertex coloring} of $\K$, meaning that we put $r$ different colors to all the vertices of $\K$ so that 
    the set of vertices with the $i$-th color is $\alpha_i$ and
           any adjacent vertices in $\K$ are assigned different colors. 
            \n
       
     \item Let $[r]=\{ 1,\cdots, r\}$. One should keep in mind the difference
     between $[r]$ and the vertex set $[m]$ of $\mathcal{K}$. For a 
     subset $\mathrm{L}\subseteq [r]$, define
     $$ \omega^{\upalpha}_{\mathrm{L}} = \bigcup_{i\in \mathrm{L}} \alpha_i \subseteq [m].$$
     It is clear that $|\omega^{\upalpha}_{\mathrm{L}}| = \sum_{i\in\mathrm{L}} |\alpha_i| \geq |\mathrm{L}|$.      
   Then the partition $\upalpha$ determines $2^r$ special  full subcomplexes 
       $\{ \mathcal{K}|_{\omega^{\upalpha}_{\mathrm{L}}}\,|\, \mathrm{L}\subseteq [r] \}$ of $\mathcal{K}$ (see Example~\ref{Exam:Color-Complex}).\n

       \item For any simplex $\sigma\in \K$, let
        \[
          \I_{\upalpha}(\sigma) = \{ i\in [r] \,
                  ;\,\sigma\cap \alpha_i \neq \varnothing \}
     \subseteq [r].
     \]
    In other words, $\I_{\upalpha}(\sigma)$ encodes the set of colors 
    of all the vertices of $\sigma$ defined by $\upalpha$.  Note that
    $|\I_{\upalpha}(\sigma)| \leq |\sigma|$.
            
  \end{itemize}
  
  \begin{exam} \label{Exam:Color-Complex}
   Let $\mathcal{K}$ be a simplicial complex on the vertex set 
   $[5]=\{1,2,3,4,5\}$ with a geometrical realization
   given by Figure~\ref{Fig:Complex}. Let
   $\upalpha=\{\{1\}, \{2,4\}, \{3,5\} \}$ be
   a partition of $[5]$, which is nondegenerate for $\mathcal{K}$. The geometrical realizations of the
   eight special full subcomplexes of $\mathcal{K}$
   determined by $\upalpha$ are shown
   in Figure~\ref{Fig:Subcomplexes}.
     \begin{figure}
        \begin{equation*}
        \vcenter{
            \hbox{
                  \mbox{$\includegraphics[width=0.46\textwidth]{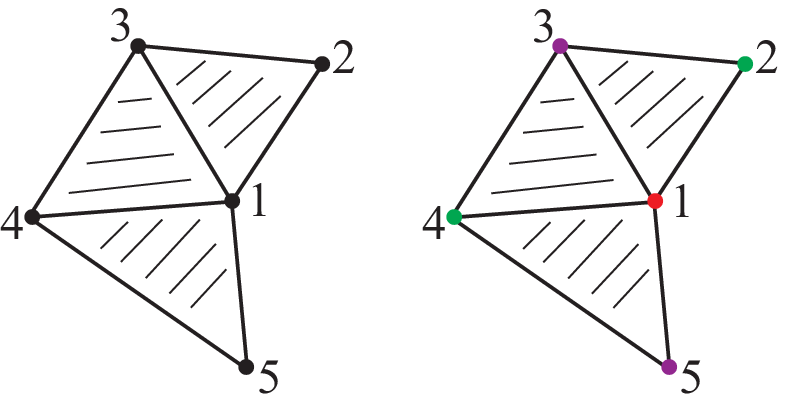}$}
                 }
           }
     \end{equation*}
   \caption{
       } \label{Fig:Complex}
   \end{figure}
    \begin{figure}
        \begin{equation*}
        \vcenter{
            \hbox{
                  \mbox{$\includegraphics[width=0.96\textwidth]{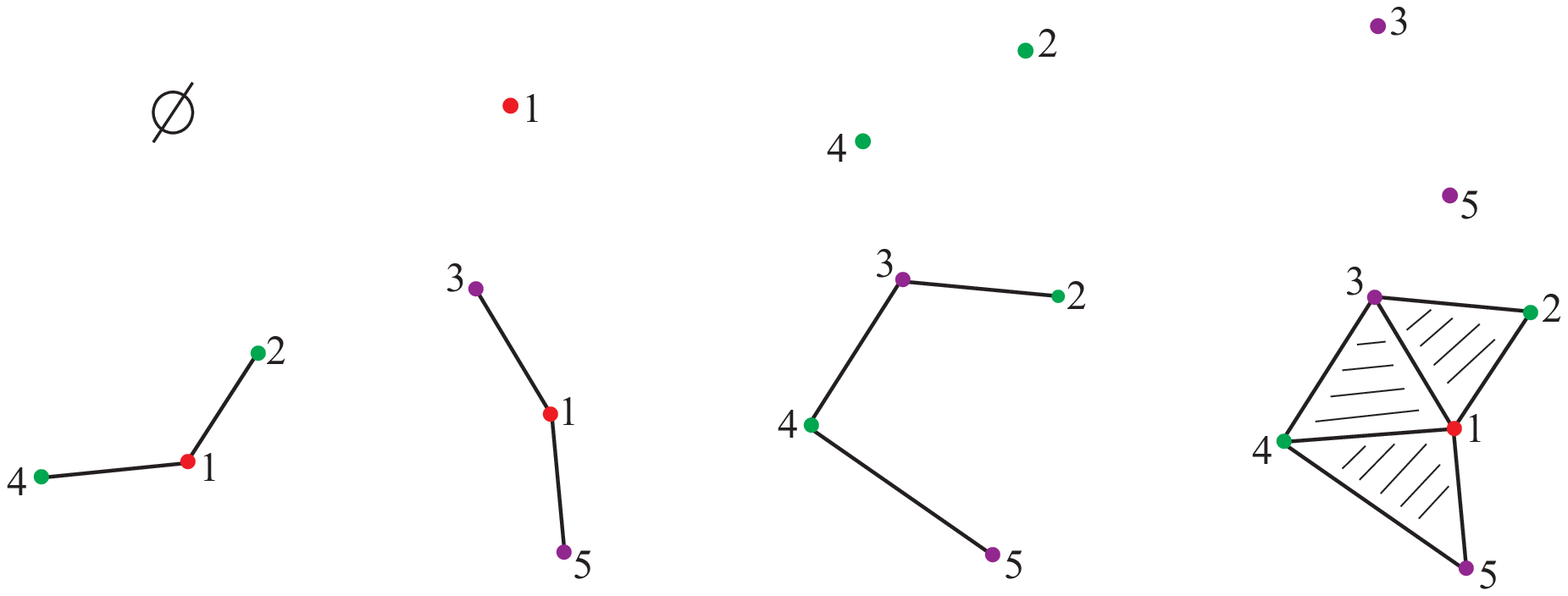}$}
                 }
           }
     \end{equation*}
   \caption{
       } \label{Fig:Subcomplexes}
   \end{figure}
   
  \end{exam}

      The main result of this paper is the following theorem.
      
      \begin{thm} \label{Thm:Main-1}
        Let $\K$ be a simplicial complex on $[m]$ and $\mathbf{k}$ be an arbitrary field.
        If a partition $\upalpha =\{\alpha_1,\cdots,\alpha_r\}$ 
          of $[m]$ is nondegenerate for $\mathcal{K}$, then for any $q\geq 0$,
        \begin{equation} \label{Equ:bound-1}
        \sum_{\mathrm{L}\subseteq [r]} \beta^{\mathbf{k}(\mathcal{K})}_{q+|\omega^{\upalpha}_{\mathrm{L}}|-|\mathrm{L}|,\omega^{\upalpha}_{\mathrm{L}}} =
        \sum_{\mathrm{L}\subseteq [r]} \dim_{\mathbf{k}} 
        \widetilde{H}^{|\mathrm{L}|-q-1}(\K|_{\omega^{\upalpha}_{\mathrm{L}}};\mathbf{k}) \geq \binom{r-\dim(\mathcal{K})-1}{q}.
         \end{equation}
           This implies
          \begin{equation} \label{Equ:bound-2}
           \sum_{\mathrm{L}\subseteq [r]} \Big(  \sum_{j=0}^{|\mathrm{L}|-1 } \dim_{\mathbf{k}} 
        \widetilde{H}^{j}(\K|_{\omega^{\upalpha}_{\mathrm{L}}};\mathbf{k}) \Big) \geq 
        2^{r-\dim(\mathcal{K})-1}.
        \end{equation}
      \end{thm}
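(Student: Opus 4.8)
The plan is to reduce Theorem~\ref{Thm:Main-1} to the already-known Theorem~\ref{Thm:Uto12} by producing an auxiliary simplicial complex on the smaller vertex set $[r]$ whose full subcomplexes (and hence multigraded Betti numbers) control the relevant family of Betti numbers of $\mathcal{K}$. The natural candidate is the ``color complex'' $\mathcal{K}_{\upalpha}$ on $[r]$ defined by declaring $\mathrm{L}\subseteq[r]$ to be a face precisely when $\mathcal{K}$ has a simplex $\sigma$ with $\I_{\upalpha}(\sigma)=\mathrm{L}$; equivalently $\mathrm{L}\in\mathcal{K}_{\upalpha}$ iff there is a simplex of $\mathcal{K}$ hitting exactly the color classes indexed by $\mathrm{L}$. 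The nondegeneracy of $\upalpha$ is exactly what makes this work: since vertices of any edge of $\mathcal{K}$ get distinct colors, each simplex $\sigma\in\mathcal{K}$ satisfies $|\I_{\upalpha}(\sigma)|=|\sigma|$, so $\dim(\mathcal{K}_{\upalpha})=\dim(\mathcal{K})$, and the map $\sigma\mapsto\I_{\upalpha}(\sigma)$ behaves like a nondegenerate simplicial map. I would first verify that $\mathcal{K}_{\upalpha}$ is indeed a simplicial complex (closed under taking subsets, using that a face of $\sigma$ hits a subset of the colors) and that it contains all singletons.

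The key step is then to identify, for each $\mathrm{L}\subseteq[r]$, the full subcomplex $\mathcal{K}|_{\omega^{\upalpha}_{\mathrm{L}}}$ with (a space homotopy equivalent to, or having the same reduced cohomology as) the full subcomplex $(\mathcal{K}_{\upalpha})|_{\mathrm{L}}$. The natural tool is the nerve lemma / a Mayer--Vietoris or quotient argument: the projection $\omega^{\upalpha}_{\mathrm{L}}\to\mathrm{L}$ collapsing each color class $\alpha_i$ to the point $i$ induces a simplicial map $\mathcal{K}|_{\omega^{\upalpha}_{\mathrm{L}}}\to(\mathcal{K}_{\upalpha})|_{\mathrm{L}}$, and I expect this map to be a homotopy equivalence because the preimage of every simplex of $(\mathcal{K}_{\upalpha})|_{\mathrm{L}}$ is contractible (it is a join of nonempty vertex sets, hence a simplex or a subcomplex of the appropriate full join that deformation retracts down). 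Concretely, one fixes a ``base vertex'' in each $\alpha_i$ and constructs an explicit deformation retraction of $\mathcal{K}|_{\omega^{\upalpha}_{\mathrm{L}}}$ onto the copy of $(\mathcal{K}_{\upalpha})|_{\mathrm{L}}$ spanned by those base vertices; nondegeneracy guarantees that sliding all vertices in $\alpha_i$ to the base vertex never forces a nonface, since any simplex uses at most one vertex of each color class. Granting this, Hochster's formula~\eqref{Equ:Hochster} gives
\[
\beta^{\mathbf{k}(\mathcal{K})}_{q+|\omega^{\upalpha}_{\mathrm{L}}|-|\mathrm{L}|,\,\omega^{\upalpha}_{\mathrm{L}}}
=\dim_{\mathbf{k}}\widetilde{H}^{|\mathrm{L}|-q-1}(\mathcal{K}|_{\omega^{\upalpha}_{\mathrm{L}}};\mathbf{k})
=\dim_{\mathbf{k}}\widetilde{H}^{|\mathrm{L}|-q-1}((\mathcal{K}_{\upalpha})|_{\mathrm{L}};\mathbf{k})
=\beta^{\mathbf{k}(\mathcal{K}_{\upalpha})}_{q,\mathrm{L}},
\]
where the last equality is Hochster's formula applied to $\mathcal{K}_{\upalpha}$ on the ground set $[r]$.

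Summing over all $\mathrm{L}\subseteq[r]$ and invoking Theorem~\ref{Thm:Uto12} for the complex $\mathcal{K}_{\upalpha}$ on $[r]$ yields
\[
\sum_{\mathrm{L}\subseteq[r]}\beta^{\mathbf{k}(\mathcal{K})}_{q+|\omega^{\upalpha}_{\mathrm{L}}|-|\mathrm{L}|,\,\omega^{\upalpha}_{\mathrm{L}}}
=\sum_{\mathrm{L}\subseteq[r]}\beta^{\mathbf{k}(\mathcal{K}_{\upalpha})}_{q,\mathrm{L}}
\geq\binom{r-\dim(\mathcal{K}_{\upalpha})-1}{q}
=\binom{r-\dim(\mathcal{K})-1}{q},
\]
which is~\eqref{Equ:bound-1}. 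For~\eqref{Equ:bound-2}, sum~\eqref{Equ:bound-1} over $q\geq0$: the left side becomes $\sum_{\mathrm{L}}\sum_{j}\dim_{\mathbf{k}}\widetilde{H}^{j}(\mathcal{K}|_{\omega^{\upalpha}_{\mathrm{L}}};\mathbf{k})$ (reindexing $j=|\mathrm{L}|-q-1$, and noting $\widetilde{H}^{j}$ vanishes outside $0\le j\le|\mathrm{L}|-1$ since $\dim\mathcal{K}|_{\omega^{\upalpha}_{\mathrm{L}}}\le|\mathrm{L}|-1$), while the right side becomes $\sum_{q\ge0}\binom{r-\dim(\mathcal{K})-1}{q}=2^{r-\dim(\mathcal{K})-1}$.

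The main obstacle is the homotopy-equivalence claim $\mathcal{K}|_{\omega^{\upalpha}_{\mathrm{L}}}\simeq(\mathcal{K}_{\upalpha})|_{\mathrm{L}}$: one must check carefully that the collapsing map is well defined as a simplicial map (each simplex of $\mathcal{K}|_{\omega^{\upalpha}_{\mathrm{L}}}$ maps to a genuine face of $\mathcal{K}_{\upalpha}$, which follows from the definition of $\mathcal{K}_{\upalpha}$), that the explicit retraction is continuous and simplicial at each stage, and—most delicately—that it never exits $\mathcal{K}$. The cleanest route is probably to avoid an explicit retraction and instead apply the nerve lemma or a discrete Morse / acyclic-carrier argument: cover $\mathcal{K}|_{\omega^{\upalpha}_{\mathrm{L}}}$ by the subcomplexes $\mathrm{St}(v)$ over base vertices $v$ and show all nonempty intersections are contractible, or directly show the fibers of the simplicial collapse are contractible and apply the Quillen fiber lemma. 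Either way, nondegeneracy of $\upalpha$ is the hypothesis that makes every relevant fiber a nonempty simplex (or cone), so it is precisely where the coloring assumption is used.
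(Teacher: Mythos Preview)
Your central claim—the homotopy equivalence $\mathcal{K}|_{\omega^{\upalpha}_{\mathrm{L}}}\simeq(\mathcal{K}_{\upalpha})|_{\mathrm{L}}$—is false in general, and without it the reduction to Theorem~\ref{Thm:Uto12} collapses. Take $\mathcal{K}$ on $[4]$ with maximal faces $\{1,2\}$ and $\{3,4\}$, and the nondegenerate partition $\alpha_1=\{1,3\}$, $\alpha_2=\{2,4\}$. Then $\mathcal{K}_{\upalpha}$ is the full $1$-simplex on $[2]$ (since the edge $\{1,2\}\in\mathcal{K}$ has color set $\{1,2\}$), so $(\mathcal{K}_{\upalpha})|_{[2]}$ is contractible; but $\mathcal{K}|_{\omega^{\upalpha}_{[2]}}=\mathcal{K}$ is two disjoint edges, with $\widetilde{H}^{0}=\mathbf{k}$. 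Your displayed chain of equalities therefore fails already at $\mathrm{L}=[2]$, $q=1$. The same example kills each of your proposed mechanisms: the ``slide to a base vertex'' retraction would require moving vertex $3$ to vertex $1$ inside $|\mathcal{K}|$, but they lie in different connected components; the Quillen fiber over the top face $\{1,2\}$ of $\mathcal{K}_{\upalpha}$ is the entire $\mathcal{K}$, which is not contractible; and your parenthetical ``the preimage is a join of nonempty vertex sets'' is exactly what goes wrong—the preimage of the closed face $\{1,2\}$ would be the join $\alpha_1*\alpha_2$ only if \emph{every} transversal $\{j_1,j_2\}$ with $j_i\in\alpha_i$ were a face of $\mathcal{K}$, and nondegeneracy does not guarantee that.

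In other words, the color complex $\mathcal{K}_{\upalpha}$ remembers which color patterns occur but forgets the multiplicity and incidence structure among same-colored vertices, and that lost information is visible in cohomology. The paper does not attempt to replace $\mathcal{K}$ by a complex on $[r]$; instead it keeps $\mathbf{k}(\mathcal{K})$ itself but changes the polynomial ring acting on it, regarding $\mathbf{k}(\mathcal{K})$ as a module over $\mathbf{k}[\mathrm{u}_1,\dots,\mathrm{u}_r]$ via $\mathrm{u}_i\mapsto\sum_{j\in\alpha_i}\mathrm{v}_j$. It then identifies $\mathrm{Tor}^{\mathbf{k}[\mathrm{u}_1,\dots,\mathrm{u}_r]}_{q,\mathrm{L}}(\mathbf{k},\mathbf{k}(\mathcal{K}))$ with $\widetilde{H}^{|\mathrm{L}|-q-1}(\mathcal{K}|_{\omega^{\upalpha}_{\mathrm{L}}};\mathbf{k})$ and applies Charalambous' lower bound for Betti numbers of multigraded modules (a genuine strengthening of the monomial-ideal result behind Theorem~\ref{Thm:Uto12}), using that the Krull dimension of $\mathbf{k}(\mathcal{K})$ over $\mathbf{k}[\mathrm{u}_1,\dots,\mathrm{u}_r]$ is $\dim(\mathcal{K})+1$. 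The passage to $r$ variables thus happens on the algebra side, not by collapsing the simplicial complex.
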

      Note that when $r=m$ (i.e. $\upalpha$ is the trivial partition of $[m]$), Theorem~\ref{Thm:Main-1}
       gives exactly Theorem~\ref{Thm:Uto12}. For nontrivial   
      nondegenerate partitions of $[m]$ for $\K$,
       Theorem~\ref{Thm:Main-1} will give us some lower bounds of the sum of certain families of 
       multigraded Betti numbers of $\mathcal{K}$.
       The inequality~\eqref{Equ:bound-2} tells us that the
       sum of all the usual Betti numbers of the $2^r$ special
       full subcomplexes of $\mathcal{K}$ determined by $\upalpha$ also has a general lower bound.
       We will prove Theorem~\ref{Thm:Main-1} at the end of the paper.
       Note that the Hochster's formula~\eqref{Equ:Hochster} itself sheds no light on how to obtain such type of lower bounds. \n
       
       \begin{rem}
         It is shown in~\cite{Notbo10} that the existence of a
         vertex coloring by $r$ colors on a simplicial complex $\mathcal{K}$ is equivalent to some
         splitting properties of a vector bundle over the  Davis-Januszkiewicz space of $\mathcal{K}$.
       \end{rem}
       
       The paper is organized as follows.
       In Section 2, we study 
        a quotient space of $\mathcal{Z}_{\mathcal{K}}$ by some torus action determined by $\upalpha$, whose cohomology groups are related
        to the multigraded Betti numbers 
       $\beta^{\mathbf{k}(\mathcal{K})}_{i,\omega^{\upalpha}_{\mathrm{L}}}$. 
       In Section 3, we show how the multigraded Betti numbers 
       $\beta^{\mathbf{k}(\mathcal{K})}_{i,\omega^{\upalpha}_{\mathrm{L}}}$ are related to
       the Tor module of $\mathbf{k}(\mathcal{K})$ 
        over a polynomial ring $\mathbf{k}[\mathrm{u}_1,\cdots, \mathrm{u}_r]$ where the $\mathbf{k}[\mathrm{u}_1,\cdots, \mathrm{u}_r]$-module structure of
        $\mathbf{k}(\mathcal{K})$ 
        is determined by the partition $\upalpha$. This relation leads to a proof of Theorem~\ref{Thm:Main-1} at the end.
      \vskip .6cm
      
    \section{A quotient space of $\mathcal{Z}_{\mathcal{K}}$ determined by $\upalpha$}
     
      Suppose $\mathcal{K}$ is a simplicial complex on $[m]$.      
    Let $\Z^m = \langle e_1,\cdots, e_m\rangle$ be the canonical unimodular basis of
      the Lie algebra of $T^m$.
      For an arbitrary partition $\upalpha$ of $[m]$, let 
      $S_{\upalpha}$ be the toral subgroup of $T^m$ corresponding to
       the subgroup of $\Z^m$ generated by the set
       $\{ e_{j} - e_{j'} \,|\, j,j' \ \text{belong to the same}\ \alpha_i \ \text{for some}\ 
            1\leq i \leq k \}$. It is easy to see that the dimension of 
            $S_{\upalpha}$ is $m-r$.
      Let $\mathcal{Z}_{\K} \slash S_{\upalpha}$ denote the quotient space
       of $\mathcal{Z}_{\K}$ by $S_{\upalpha}$ through the 
      canonical action of $T^m$. Clearly $T^m\slash S_{\upalpha}$ is an $r$-dimensional
        torus which acts on $\mathcal{Z}_{\K} \slash S_{\upalpha}$ through
        the canonical action of $T^m$. \n
      
      \begin{lem}
      For a partition $\upalpha$ of $[m]$,
      the canonical action of $S_{\upalpha}$ on $\mathcal{Z}_{\K}$
      is free if and only if $\upalpha$ is nondegenerate for $\K$.  
       \end{lem}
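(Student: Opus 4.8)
The plan is to analyze when a point of $\mathcal{Z}_{\K}$ has nontrivial stabilizer under the $S_{\upalpha}$-action, and show that this happens if and only if $\upalpha$ fails to be nondegenerate. First I would recall the structure of the isotropy groups of the $T^m$-action on $\mathcal{Z}_{\K}$. A point $x = (x_1,\dots,x_m) \in \prod_{j=1}^m D^2_{(j)}$ lies in $\mathcal{Z}_{\K}$ precisely when the set $\sigma(x) = \{ j \in [m] \,;\, |x_j| < 1 \}$ of coordinates in the open disk is a simplex of $\K$. The stabilizer of $x$ under the canonical $T^m$-action is the coordinate subtorus $T^{\bar\sigma(x)}$ spanned by $\{ e_j \,;\, j \notin \sigma(x),\ x_j \neq 0 \}$ together with a finite subgroup from the coordinates $x_j = 0$ with $j \notin \sigma(x)$; what matters is that the identity component of the isotropy group is the coordinate subtorus on a subset of $[m] \setminus \sigma(x)$, and in particular every isotropy subgroup is contained in some coordinate subtorus $T^{\tau}$ with $\tau \cap \sigma(x) = \varnothing$. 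So the $S_{\upalpha}$-action is free if and only if for every simplex $\sigma \in \K$ and every subset $\tau \subseteq [m] \setminus \sigma$, the intersection $S_{\upalpha} \cap T^{\tau}$ is trivial.

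Next I would translate this intersection condition into linear algebra over $\Z$ (for the torus part) and note that the torsion part causes no extra trouble because $S_{\upalpha}$ is a connected subtorus. The Lie algebra of $S_{\upalpha}$ is the subspace $\mathfrak{s}_{\upalpha} \subseteq \R^m$ spanned by $\{ e_j - e_{j'} \,;\, j,j' \text{ in the same } \alpha_i \}$; concretely $\mathfrak{s}_{\upalpha} = \{ (t_1,\dots,t_m) \,;\, \sum_{j \in \alpha_i} t_j = 0 \text{ for all } i \}$. The Lie algebra of $T^{\tau}$ is the coordinate subspace $\R^{\tau}$. Since $S_{\upalpha}$ is determined by a saturated sublattice (the sublattice of $\Z^m$ generated by the differences $e_j - e_{j'}$ is saturated, being the kernel of the surjection $\Z^m \to \Z^r$ summing over each block), the intersection $S_{\upalpha} \cap T^{\tau}$ is trivial if and only if $\mathfrak{s}_{\upalpha} \cap \R^{\tau} = 0$, i.e. if and only if the only element of $\R^{\tau}$ with zero sum over each block $\alpha_i$ is the zero vector. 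A vector supported on $\tau$ with $\sum_{j \in \alpha_i \cap \tau} t_j = 0$ for every $i$ can be chosen nonzero exactly when some block $\alpha_i$ contains at least two elements of $\tau$; when every block meets $\tau$ in at most one element, the conditions force all coordinates to vanish.

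Assembling these observations gives both directions. For the ``if'' direction: suppose $\upalpha$ is nondegenerate for $\K$. Take any simplex $\sigma \in \K$ and any $\tau \subseteq [m] \setminus \sigma$; I claim every block $\alpha_i$ meets $\tau$ in at most one vertex. Indeed, if $j, j' \in \tau$ with $j, j'$ both in $\alpha_i$ and $j \neq j'$, then $\{j, j'\}$ would have to be a nonface of $\K$ for nondegeneracy to even make sense — but more to the point we need to rule out $\{j,j'\} \in \K$, and nondegeneracy says exactly that no $1$-simplex of $\K$ has both endpoints in the same block, so $\{j,j'\} \notin \K$; however this alone does not immediately block the isotropy, so the argument must instead go: since $S_{\upalpha} \cap T^{\tau} = 0$ whenever no block has two elements of $\tau$, and the isotropy group of any $x \in \mathcal{Z}_{\K}$ sits inside $T^{\tau}$ for $\tau = [m] \setminus \sigma(x)$ with the further property that any element fixing $x$ whose $\tau$-support $\tau'$ had two vertices $j,j'$ in one block would force $\{j,j'\} \cup \sigma(x)$ to be a face of $\K$ by the full-subcomplex structure, contradicting nondegeneracy. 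Conversely, for the ``only if'' direction: if $\upalpha$ is degenerate, pick a $1$-simplex $\{j, j'\} \in \K$ with $j, j'$ in the same block $\alpha_i$, and exhibit a point $x \in \mathcal{Z}_{\K}$ — e.g. $x_k = 0$ for $k \in \{j,j'\}$ and $x_k$ on the unit circle otherwise, which lies in $\mathcal{Z}_{\K}$ because $\sigma(x) = \varnothing \in \K$ (or more sharply use $\sigma(x) = \{j,j'\}$ if one wants a top-stratum point) — together with the circle subgroup $\{(1,\dots,1,z,1,\dots,1,z^{-1},1,\dots,1) \,;\, z \in S^1\}$ in positions $j, j'$, which lies in $S_{\upalpha}$ and fixes $x$, so the action is not free. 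The main obstacle I expect is bookkeeping the precise form of the isotropy subgroups of $\mathcal{Z}_{\K}$ (including the torsion coming from zero coordinates) carefully enough that the ``if'' direction genuinely uses nondegeneracy of $\K$ rather than just of the abstract partition; once the isotropy groups are pinned down, both implications reduce to the elementary linear-algebra fact about $\mathfrak{s}_{\upalpha} \cap \R^{\tau}$ above.
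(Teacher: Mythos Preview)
Your description of the isotropy subgroups of the $T^m$-action on $\mathcal{Z}_{\K}$ is backwards, and this breaks the ``if'' direction. For $x=(x_1,\dots,x_m)$, the $j$-th circle factor fixes $x$ precisely when $x_j=0$; hence the stabilizer of $x$ in $T^m$ is the coordinate subtorus $T^{\{j:\,x_j=0\}}$, which is contained in $T^{\sigma(x)}$, not in $T^{[m]\setminus\sigma(x)}$ as you write. (Your description ``spanned by $\{e_j:\,j\notin\sigma(x),\ x_j\neq 0\}$'' is precisely the set of directions that \emph{move} $x$.) Once this is corrected, your linear-algebra criterion applies with $\tau\subseteq\sigma(x)\in\K$: nondegeneracy says each block $\alpha_i$ meets any face $\sigma$ in at most one vertex, hence meets $\tau$ in at most one vertex, so $\mathfrak{s}_{\upalpha}\cap\R^{\tau}=0$ and (by saturation) $S_{\upalpha}\cap T^{\tau}=\{1\}$. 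Your attempt to salvage the argument via ``$\{j,j'\}\cup\sigma(x)$ would be a face'' is not needed and not justified.

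Your ``only if'' direction is essentially fine once you drop the mistaken claim $\sigma(x)=\varnothing$: with $x_j=x_{j'}=0$ and the other coordinates on $S^1$ you have $\sigma(x)=\{j,j'\}\in\K$, and the diagonal circle $C_{j,j'}\subset S_{\upalpha}$ fixes $x$. For comparison, the paper's proof is shorter and avoids Lie-algebra computations: it checks directly that each generating circle $C_{j,j'}$ of $S_{\upalpha}$ has a fixed point in the piece $\prod_{k\in\sigma}D^2_{(k)}\times\prod_{k\notin\sigma}S^1_{(k)}$ if and only if $j,j'\in\sigma$, and concludes that freeness is equivalent to $|\sigma\cap\alpha_i|\le 1$ for all $\sigma\in\K$ and all $i$. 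Your route (isotropy groups plus the saturated-sublattice observation) is a valid alternative once the isotropy is described correctly, and has the mild advantage of making explicit why no finite isotropy can occur.
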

      \begin{proof}
      Let $v_{j}$ be the center of the $2$-disk $D^2_{(j)}$ 
      in~\eqref{Equ:Z-K}. For any $j<j'\in [m]$, 
      let $C_{j,j'}$ denote the circle subgroup of
      $T^m$ corresponding to the subgroup of $\Z^m$ generated by $e_j - e_{j'}$. The fixed point set of the canonical action of $C_{j,j'}$
      on $D^2_{(1)}\times\cdots\times D^2_{(m)}$ is
      $ D^2_{(1)}\times \cdots \times D^2_{(j-1)}\times \{v_j\}\times D^2_{(j+1)} \times \cdots\times
     D^2_{(j'-1)}\times \{v_{j'}\}\times D^2_{(j'+1)} \times\cdots\times D^2_{(m)}$.
     In particular, the action of $C_{j,j'}$ on the component
     $\prod_{k\in \sigma} D^2_{(k)} \times \prod_{k\notin \sigma} S^1_{(k)} \subseteq \mathcal{Z}_{\K}$ has a fixed point if and only if
     $j$ and $j'$ are both in $\sigma$.
       Then it is easy to that the canonical 
       $S_{\upalpha}$-action on $\mathcal{Z}_{\K}$ is free if and only if for any simplex $\sigma\in \K$,
       $|\sigma \cap \alpha_i|\leq 1$ for all $i\in [r]$, which means that $\upalpha$ is nondegenerate for $\K$.
      \end{proof}
     
      For any partition $\upalpha$ of $[m]$, it is shown in~\cite[Theorem 1.2]{Yu2019} that there is a group isomorphism
     \begin{equation} \label{Equ:Isom-1}
        H^q(\mathcal{Z}_{\K} \slash S_{\upalpha};\mathbf{k}) \cong 
          \underset{\mathrm{L}\subseteq [r]}{\bigoplus} \widetilde{H}^{q-|\mathrm{L}|-1} 
          (\K|_{\omega^{\upalpha}_{\mathrm{L}}};\mathbf{k}), \, \forall q\geq 0.
        \end{equation}   
  So by the Hochster's formula~\eqref{Equ:Hochster}, we have
   \begin{equation} \label{Equ:dfd}
     \dim_{\mathbf{k}}  H^q(\mathcal{Z}_{\K} \slash S_{\upalpha};\mathbf{k}) = \sum_{\mathrm{L}\subseteq [r]} \beta^{\mathbf{k}(\mathcal{K})}_{|\omega^{\upalpha}_{\mathrm{L}}|+|\mathrm{L}|-q,
   \omega^{\upalpha}_{\mathrm{L}} }.  
   \end{equation}
     If the partition $\upalpha$ of $[m]$ is nondegenerate for $\K$, 
     the canonical action of $S_{\upalpha}$ on $\mathcal{Z}_{\K}$
      is free. By~\cite[Theorem 1]{Panov15} (also see~\cite[7.37]{BP02}) there is an isomorphism of graded algebras for the free quotient $\mathcal{Z}_{\mathcal{K}}\slash S_{\upalpha}$:
      \begin{equation} \label{Equ:Isom-2}
        H^*(\mathcal{Z}_{\mathcal{K}}\slash S_{\upalpha};\mathbf{k}) \cong 
        \mathrm{Tor}^{H^*(B(T^m\slash S_{\upalpha});\mathbf{k})}(\mathbf{k}(\mathcal{K}),\mathbf{k}) 
        \end{equation} 
     where $B(T^m\slash S_{\upalpha})$ is the classifying space for the $r$-dimensional torus $T^m\slash S_{\upalpha}$.
 By combining the isomorphisms in~\eqref{Equ:Isom-1} and~\eqref{Equ:Isom-2}, we
  could compute the sum of the multigraded Betti numbers 
  in~\eqref{Equ:dfd} by
   $ \mathrm{Tor}^{H^*(B(T^m\slash S_{\upalpha});\mathbf{k})}(\mathbf{k}(\mathcal{K}),\mathbf{k})$.
   But the proof in~\cite{Panov15} does not give us any explicit formula for the isomorphism in~\eqref{Equ:Isom-2}, which makes this computation a little vague. In addition,
   the sum of multigraded Betti numbers in~\eqref{Equ:dfd} is a little different from
   the sum appearing in Theorem~\ref{Thm:Main-1}.
    In Section 3, we will show 
  how each 
  $\beta^{\mathbf{k}(\mathcal{K})}_{i,\omega^{\upalpha}_{\mathrm{L}}}$
   is related to
     $\mathrm{Tor}^{H^*(B(T^m\slash S_{\upalpha});\mathbf{k})}(\mathbf{k}(\mathcal{K}),\mathbf{k})$ via a cell decomposition of $\mathcal{Z}_{\K} \slash S_{\upalpha}$ defined below. This cell decomposition
     of $\mathcal{Z}_{\K} \slash S_{\upalpha}$ is originally
   constructed in~\cite[Section 2]{Yu2019}.\n
    
     Let 
     $\Delta^{[m]}$ be a $(m-1)$-dimensional geometric simplex, which is 
     the convex hull of $m$ points $v_1,\cdots,v_m$ in general position in the
     Euclidean space $\R^m$. Then
     $\mathcal{K}$ is geometrically realized as a subset of $\Delta^{[m]}$,
     where any simplex $\sigma=\{j_1,\cdots,j_s\}\in\K$ is realized as
     the convex hull of $v_{j_1},\cdots, v_{j_s}$, or equivalently the join
     $v_{j_1}*\cdots*v_{j_s}$.
     For any $\omega\subseteq [m]$, let
     $\Delta^{\omega}$ denote the face of $\Delta^{[m]}$ whose 
     vertex set is $\omega$, i.e.
      $$\Delta^{\omega}= \underset{j\in \omega}{\Asterisk} v_j.$$
    
   If we think of $D^2$ as $S^1*v$,
       we can rewrite the decomposition of $\mathcal{Z}_{\K}$ 
      in~\eqref{Equ:Z-K} as:
       \begin{equation}\label{Equ:Construction-Complex-Moment_3}
     \mathcal{Z}_{\K}= 
        \bigcup_{\sigma\in \mathcal{K}} \Big(   
        \prod_{j\in \sigma} \big( S^1_{(j)}*v_j \big)\times 
           \prod_{j\notin \sigma} S^1_{(j)}\Big).
    \end{equation}  

  From this decomposition of $ \mathcal{Z}_{\K}$,
    we can obtain a decomposition of $\mathcal{Z}_{\K} \slash S_{\upalpha}$
    as follows (see~\cite[Section 2]{Yu2019}):
       \begin{equation}\label{Equ:Decomp-Quotient}
       \mathcal{Z}_{\K} \slash S_{\upalpha} 
          = \bigcup_{\sigma\in \K} \Big(
       \prod_{i\in \mathrm{I}_{\upalpha}(\sigma)} 
       \big(S^1_{\{ i \}} * \underset{j\in \sigma\cap \alpha_i}{\Asterisk} v_j \big)\times
        \prod_{i\in [r]\backslash \mathrm{I}_{\upalpha}(\sigma)} S^1_{\{i\}} \Big)  \subseteq \prod_{i\in [r]} S^1_{\{i\}} * \Delta^{\alpha_i}
       \end{equation} 
    where $S^1_{\{i\}} = e^1_{\{i\}}\cup e^0_{\{i\}}$ denotes a copy of $S^1$ corresponding to $i\in [r]$ and, we consider the index
    $i\in [r]$ to be increasing from the left to the right. \n
    \noindent  \textbf{Warning:}
   We use the subscript $_{(j)}$ for $j\in [m]$  while use subscript $_{\{i\}}$ for $i\in [r]$. 
  \n
     A natural cell decomposition of 
  $S^1_{\{i\}} * \Delta^{\alpha_i}$ is given by
  $$\{ (S^1 * \Delta^{\tau})^{\circ}\,|\, \tau \subseteq \alpha_i, \tau\neq \varnothing \}\cup \{ e^1_{\{i\}}, e^0_{\{i\}} \} $$
  where $(S^1 * \Delta^{\tau})^{\circ}$ is the interior of 
  $S^1*\Delta^{\tau}$. If we consider these cells as the basis of the
  cellular cochain complex of $S^1_{\{i\}} * \Delta^{\alpha_i}$, then
  \begin{equation} \label{Equ:Cobound-Simplex}
  \bullet\ \text{the coboundary of} \ 
   (S^1 * \Delta^{\tau})^{\circ} =
   \underset{|\omega|=|\tau|+1}{\sum_{\tau\subset\omega\subseteq \alpha_i}} \pm
  (S^1 * \Delta^{\omega})^{\circ},\ \tau\neq \varnothing. \quad
  \end{equation}
  \begin{itemize}
    \item the coboundary of
   $e^1_{\{i\}}$ is $\sum_{j\in \alpha_i}(S^1 * v_{j})^{\circ}$, and the coboundary of $e^0_{\{i\}}$ is zero. 
   \end{itemize}
   \n

      In the rest of this section, we always assume that
    the partition $\upalpha$ of $[m]$ is nondegenerate for $\mathcal{K}$. 
   Then for any $\sigma\in\K$, we have
   $|\mathrm{I}_{\upalpha}(\sigma)|=|\sigma|$. In other words,
    $\sigma\cap \alpha_i$ is a single vertex of $\sigma$ for any $i\in \mathrm{I}_{\upalpha}(\sigma)$.
    For convenience, we define
      $$\sigma_{\{i\}} = \sigma\cap \alpha_i \in [m], \, i\in \mathrm{I}_{\upalpha}(\sigma).$$
      By this notation, $\sigma = \bigcup_{i\in \mathrm{I}_{\upalpha}(\sigma)} \sigma_{\{i\}} \subseteq [m]$.
      So we can rewrite~\eqref{Equ:Decomp-Quotient} as
     \begin{align} \label{Equ:Decomp-Quotient-2}
      \mathcal{Z}_{\K} \slash S_{\upalpha} 
         & = \bigcup_{\sigma\in \K} \Big(
       \prod_{i\in \mathrm{I}_{\upalpha}(\sigma)} 
       \big(S^1_{\{i\}} *  v_{\sigma_{\{i\}}} \big)\times
        \prod_{i\in [r]\backslash \mathrm{I}_{\upalpha}(\sigma)} S^1_{\{i\}} \Big) .
     \end{align} 
  
      For any simplex $\sigma\in\K$ and $\mathrm{I}\subseteq [r]\backslash
     \mathrm{I}_{\upalpha}(\sigma) $, define 
   \begin{align}\label{Equ:Cell-1} 
    B_{(\sigma,\mathrm{I})} &=  \prod_{i\in \mathrm{I}_{\upalpha}(\sigma)} 
       \big(S^1_{\{i\}} *  v_{\sigma_{\{i\}}} \big)^{\circ} \times \prod_{i\in \mathrm{I}} e^{1}_{\{i\}} \times
        \prod_{i\in [r]\backslash (\mathrm{I}_{\upalpha}(\sigma))\cup\mathrm{I})} e^0_{\{i\}} \ \subset \mathcal{Z}_{\K} \slash S_{\upalpha}.
    \end{align}
   Then 
   $B_{(\sigma,\mathrm{I})}$ is a cell of dimension $2|\I_{\upalpha}(\sigma)|+|\mathrm{I}| = 2|\sigma|+|\mathrm{I}|$.  
   It is easy to see that 
   $$\{ B_{(\sigma,\mathrm{I})} \, |\, \sigma \in \K,\,
     \mathrm{I} \subseteq [r]\backslash \mathrm{I}_{\upalpha}(\sigma)\}$$ 
     is a cell decomposition of $\mathcal{Z}_{\K} \slash S_{\upalpha}$.
     Let $C^{*}(\mathcal{Z}_{\K} \slash S_{\upalpha};\mathbf{k})$ denote the cellular cochain complex
     determined by this cell decomposition whose generators are denoted by 
     $$\{ B^*_{(\sigma,\mathrm{I})} \, |\, \sigma \in \K,\,
     \mathrm{I} \subseteq [r]\backslash \mathrm{I}_{\upalpha}(\sigma)\}.$$ 
  To write the coboundary map of $C^{*}(\mathcal{Z}_{\K} \slash S_{\upalpha};\mathbf{k})$, 
    we introduce the following conventions and notations.
    \begin{itemize}
         
  \item We orient the cell $B_{(\sigma,\mathrm{I})}$
        by ordering the factors in~\eqref{Equ:Cell-1} so that the index $i \in [r]$ is
        increasing from the left to the right
        and,
    orienting the circle $S^1_{\{i\}}$ and the cells
   $e^1_{\{i\}}$ and $e^0_{\{i\}}$ in the same way for all different $i\in [r]$. \n
   
    \item  For any $i\in \mathrm{L} \subseteq [r]$, define
     $\kappa(i,\mathrm{L})= (-1)^{c(i,\mathrm{L})}$, where
     $c(i,\mathrm{L})$ is the number of elements in $\mathrm{L}$ that are less than $i$. 
     For any $\mathrm{L}, \mathrm{L'}\subseteq [r]$ with $\mathrm{L}\cap \mathrm{L'}=\varnothing$,
     we clearly have  $\kappa(i,\mathrm{L}\cup\mathrm{L'}) = \kappa(i,\mathrm{L})\cdot \kappa(i,\mathrm{L}')$. 
   \end{itemize} 
 \n
 
  By the above conventions,  the coboundary of 
 $B^*_{(\sigma,\mathrm{I})}$ is given by:
      \begin{align} \label{Equ:Boundary-1}
          d B^*_{(\sigma,\mathrm{I})} & =  \sum_{i\in \mathrm{I}} 
            \kappa(i, \mathrm{I} )\, \Big(
          \underset{|\omega| = |\sigma|+1}
          {\sum_{\sigma\subset\omega\in\K,\, \omega\backslash \sigma \in \alpha_i}}
          B^*_{(\omega,\mathrm{I}\backslash \{i\})}  \Big) 
     \end{align} 
     where $\omega\backslash \sigma \in [m]$ denotes the only vertex of 
      $\omega\in \K$ that is not contained in $\sigma$. 
      By~\eqref{Equ:Cobound-Simplex}, the coboundary of any factor $\big(S^1_{\{i\}} *  v_{\sigma_{\{i\}}} \big)^{\circ}$ in $B_{(\sigma,\mathrm{I})}$ is zero since there is no simplex $\omega \in \K$ 
      with $\omega\cap \alpha_{i}$ consisting of more than one vertex. In addition, when taking the coboundary of 
       $B^*_{(\sigma,\mathrm{I})}$ via the Leibniz' rule, passing an even dimensional factor has no contribution to the sign.
    \n               
 
     For any $\mathrm{L}\subseteq [r]$, denote by 
      $C^{*,\mathrm{L}}(\mathcal{Z}_{\K} \slash S_{\upalpha};\mathbf{k})$  
      the $\mathbf{k}$-submodule
       of $C^*(\mathcal{Z}_{\K} \slash S_{\upalpha};\mathbf{k})$ generated by the set   
          $\{ B^*_{(\sigma, \mathrm{I})} \,|\, \sigma\in\K, \,\mathrm{I}_{\upalpha}(\sigma)\cup 
            \mathrm{I} =\mathrm{L},\, \mathrm{I}_{\upalpha}(\sigma)\cap 
            \mathrm{I} =\varnothing \}$.
   By the formula~\eqref{Equ:Boundary-1}, it is easy to see that
   $C^{*,\mathrm{L}}(\mathcal{Z}_{\K} \slash S_{\upalpha};\mathbf{k})$ is a cochain subcomplex
   of $C^*(\mathcal{Z}_{\K} \slash S_{\upalpha};\mathbf{k})$. So we have a decomposition of cochain complexes: 
    \[ C^*(\mathcal{Z}_{\K} \slash S_{\upalpha};\mathbf{k})  = \bigoplus_{\mathrm{L}\subseteq [r]} 
    C^{*,\mathrm{L}}(\mathcal{Z}_{\K} \slash S_{\upalpha};\mathbf{k}) . \]
    
Next, we introduce another space $\mathcal{X}(\K, \upalpha)$ which is
   homotopy equivalent to $\mathcal{Z}_{\K} \slash S_{\upalpha}$.
   We will see in Section 3 that $\mathcal{X}(\K, \upalpha)$ plays an important role in relating the cohomology of $\mathcal{Z}_{\K} \slash S_{\upalpha}$ with
   $\mathrm{Tor}^{H^*(B(T^m\slash S_{\upalpha});\mathbf{k})}(\mathbf{k}(\mathcal{K}),\mathbf{k})$. 
   Let 
   $S^{\infty}_{(j)}$ denote a copy of infinite dimensional sphere $S^{\infty}$ based at $v_j$ for each
   $j\in[m]$. We define
       \begin{equation} \label{Equ:Decomp-Complex-Deform}
      \mathcal{X}(\K, \upalpha) := \bigcup_{\sigma\in \K} \Big(
       \prod_{i\in \mathrm{I}_{\upalpha}(\sigma)} 
       \big(S^1_{\{i\}} * S^{\infty}_{(\sigma_{\{i\}})} \big)\times
        \prod_{i\in [r]\backslash \mathrm{I}_{\upalpha}(\sigma)} S^1_{\{i\}} \Big)
        \subseteq  \prod_{i\in [r]} \big( S^1_{\{i\}} *  \underset{j\in \alpha_i}{\Asterisk}
         S^{\infty}_{(j)}\big).
   \end{equation} 
   So $\mathcal{Z}_{\K} \slash S_{\upalpha}$ is a subspace of $\mathcal{X}(\K, \upalpha)$. By   
   contracting each $S^{\infty}_{(j)}$ to $v_{j}$, we obtain
   a deformation contraction from $ \mathcal{X}(\K, \upalpha)$ to 
   $\mathcal{Z}_{\K} \slash S_{\upalpha}$.
    The definition of $\mathcal{X}(\K, \upalpha)$
   is inspired by the deformation of
   $\mathcal{Z}_{\K} = (D^2,S^1)^{\K}$ to the polyhedral product 
   $(S^{\infty}, S^1)^{\mathcal{K}}$ introduced in~\cite[Sec.4.5]{BP15}.
    \n

   Next, we construct
    a cell decomposition of $\mathcal{X}(\K, \upalpha)$. There is a natural cell decomposition on $S^{\infty}$ 
   with exactly one cell $\xi^n$ in each dimension $n\geq 0$, where 
   the boundary of $\xi^{2k}$ 
   is the closure of $\xi^{2k+1}$ and the boundary of 
   $\xi^{2k+1}$ is zero for every $k\geq 0$. 
    We denote the cells in $S^{\infty}_{(j)}$ 
    by $\{ \xi^n_{(j)} \}_{n \geq 0}$ for any $j\in [m]$.
    In particular, $\xi^0_{(j)} = v_j$
    for any $j\in [m]$.\n

    \begin{defi}[Weight]
   We call any function $\mathbf{h}: [m]\rightarrow \N$ a \emph{weight} on $[m]$.
   The \emph{support} of $\mathbf{h}$ is defined to be
    supp$(\mathbf{h})=\{ j\in [m]\,|\, \mathbf{h}(j)\neq 0 \}$.
   \begin{itemize}
   \item  For any $j\in [m]$, define 
   $\bm{\delta}_j : [m]\rightarrow \N$ by
      $\bm{\delta}_j(j') =0$ if $j'\neq j$ and $\bm{\delta}_j(j)=1$.\n
      
      \item For any $\sigma \subseteq [m]$, let $\mathbf{1}_{\sigma} =
       \sum_{j\in\sigma} \bm{\delta}_j$ whose support is $\sigma$.
      \end{itemize}

   \end{defi}

   For any $\sigma\in\K$, $\mathrm{I}\subseteq [r]$
   and any weight
    $\mathbf{h}$ on $[m]$ with supp$(\mathbf{h})=\sigma$, define a cell 
     $\mathbf{B}_{(\sigma,\mathbf{h},\mathrm{I})} \subset
     \mathcal{X}(\K, \upalpha)$ by
   \begin{align}\label{Equ:Cell-2} 
   \mathbf{B}_{(\sigma,\mathbf{h},\mathrm{I})} &= \prod_{i\in \mathrm{I}_{\upalpha}(\sigma)\backslash \mathrm{I}}   
            \Big(S^1_{\{i\}} * \xi^{2\mathbf{h}(\sigma_{\{i\}})-2}_{(\sigma_{\{i\}})} 
            \Big)^{\circ} \times
             \prod_{i\in \mathrm{I}\cap\mathrm{I}_{\upalpha}(\sigma)} 
       \Big( S^1_{\{i\}} * \xi^{2\mathbf{h}(\sigma_{\{i\}})-1}_{(\sigma_{\{i\}})}
       \Big)^{\circ} \\
        &\ \ \times \prod_{i\in \mathrm{I}  \backslash \mathrm{I}_{\upalpha}(\sigma)} e^{1}_{\{i\}}   \times
        \prod_{i\in [r]\backslash (\mathrm{I}_{\upalpha}(\sigma)\cup \mathrm{I})} e^0_{\{i\}}. \notag
        \end{align}    
 It is easy to see that  $\{ \mathbf{B}_{(\sigma, \mathbf{h}, \mathrm{I})} \, |\, \sigma \in \K,\, \text{supp}(\mathbf{h})=\sigma,\, \mathrm{I} \subseteq [r]\}$ 
     is
     a cell decomposition of $\mathcal{X}(\K, \upalpha)$.
     Note that $\dim \mathbf{B}_{(\sigma,\mathbf{h},\mathrm{I})}=
  |\mathrm{I}| + \sum_{j\in\sigma} 2\mathbf{h}(j)$.\n
    
   Let $\iota_{\upalpha} : \mathcal{Z}_{\mathcal{K}}\slash S_{\upalpha} \hookrightarrow 
      \mathcal{X}(\mathcal{K},\upalpha)$ be the inclusion map. So we have
      $$\iota_{\upalpha} ( B_{(\sigma,\mathrm{I})}) =\mathbf{B}_{(\sigma,\mathbf{1}_{\sigma},\mathrm{I})},\
      \sigma\in \K,\, \mathrm{I}\cap
     \mathrm{I}_{\upalpha}(\sigma) =\varnothing.$$
     
      Let $C^{*}(\mathcal{X}(\K, \upalpha);\mathbf{k})$ be the cellular cochain complex
     determined by this 
     cell decomposition of $\mathcal{X}(\K, \upalpha)$, whose generators are denoted by 
     $$\{ \mathbf{B}^*_{(\sigma, \mathbf{h}, \mathrm{I})} \, |\, \sigma \in \K,\, 
     \text{supp}(\mathbf{h})=\sigma,\,    \mathrm{I} \subseteq [r]\}.$$
     
   Similarly to $B_{(\sigma,\mathrm{I})}$, we orient the cell $\mathbf{B}_{(\sigma, \mathbf{h}, \mathrm{I})}$
    by ordering the factors in~\eqref{Equ:Cell-2}
     so that the index $i\in [r]$   
     is increasing from the left to the right and,
     orienting the cells in all different copies of $S^1_{\{i\}}$ and $S^{\infty}_{\{i\}}$ in the same way. Then it is easy to see that the coboundary of $\mathbf{B}^*_{(\sigma, \mathbf{h}, \mathrm{I})}$ is given by:
 \begin{align} \label{Equ:Boundary-2}
    d\mathbf{B}^*_{(\sigma, \mathbf{h}, \mathrm{I})} & = 
    \sum_{i\in  \mathrm{I} \cap \mathrm{I}_{\upalpha}(\sigma)} 
   \kappa\big( i, \mathrm{I} \big)\cdot  \mathbf{B}^*_{(\sigma, \mathbf{h}+\bm{\delta}_{\sigma_{\{i\}}}, \mathrm{I}\backslash\{i\})}  \\
   &+ 
  \sum_{i\in \mathrm{I}\backslash \mathrm{I}_{\upalpha}(\sigma)} 
  \underset{|\omega|=|\sigma|+1} 
            {\sum_{\sigma\subset \omega\in\K,\, \omega\backslash \sigma \in \alpha_i}} 
             \kappa\big( i, \mathrm{I} \big) 
             \cdot
   \mathbf{B}^*_{(\omega,\mathbf{h}+\bm{\delta}_{\omega\backslash\sigma},\mathrm{I}\backslash\{ i\})}. \notag
 \end{align}
       
 For any $\mathrm{L}\subseteq [r]$, denote by 
      $C^{*,\mathrm{L}}(\mathcal{X}(\K, \upalpha);\mathbf{k})$  
      the $\mathbf{k}$-submodule
       of $C^*(\mathcal{X}(\K, \upalpha);\mathbf{k})$ generated by the set   
          $\{ \mathbf{B}^*_{(\sigma,\mathbf{h}, \mathrm{I})} \,|\,
           \sigma\in\K, \, \text{supp}(\mathbf{h})=\sigma,\,
      \mathrm{I}_{\upalpha}(\sigma)\cup 
            \mathrm{I} =\mathrm{L}
       \}$.
   By~\eqref{Equ:Boundary-2}, it is easy to see that
   $C^{*,\mathrm{L}}(\mathcal{X}(\K, \upalpha);\mathbf{k})$
    is a cochain subcomplex
   of $C^{*}(\mathcal{X}(\K, \upalpha);\mathbf{k})$. So we have a decomposition of cochain complexes: 
    \[ C^{*}(\mathcal{X}(\K, \upalpha);\mathbf{k}) = \bigoplus_{\mathrm{L}\subseteq [r]} 
  C^{*,\mathrm{L}}(\mathcal{X}(\K, \upalpha);\mathbf{k}) . \]

   \vskip .6cm

     \section{The multigraded structure of $\mathrm{Tor}^{H^*(B(T^m\slash S_{\upalpha});\mathbf{k})}(\mathbf{k}(\mathcal{K}),\mathbf{k})$}
     
      Let $\upalpha = \{ \alpha_1,\cdots, \alpha_r\}$ be 
       partition of $[m]$ that is nondegenerate for $\K$. 
       Let us see how to compute $\mathrm{Tor}^{H^*(B(T^m\slash S_{\upalpha});\mathbf{k})}(\mathbf{k}(\mathcal{K}),\mathbf{k})$. Note that $H^*(B(T^m\slash S_{\upalpha});\mathbf{k})$ can be identified with the polynomial ring
       $\mathbf{k}[\mathrm{u}_1,\cdots,\mathrm{u}_r]$ with $r$ variables.      
     According to~\cite[7.37]{BP02} (also see~\cite[Sec.4.8]{BP15}), the $\mathbf{k}[\mathrm{u}_1,\cdots,\mathrm{u}_r]$-module structure
     on $\mathbf{k}(\mathcal{K})$ is given by 
     \begin{align} \label{Equ:Mod-Struc}
        \mathbf{k}[\mathrm{u}_1,\cdots,\mathrm{u}_r] & \longrightarrow \mathbf{k}(\mathcal{K}) \\
                       \mathrm{u}_i \ & \longmapsto \sum_{j\in \alpha_i} \mathrm{v}_j, \ 1\leq i\leq r.\notag
     \end{align}
      Let $\Lambda_{\mathbf{k}}[t_1,\cdots, t_r]$ denote the exterior algebra
      over $\mathbf{k}$ with $r$ generators $t_1,\cdots, t_r$.
    We define a differential $d_{\upalpha}$ on the tensor product
     $\Lambda_{\mathbf{k}}[t_1,\cdots, t_r] \otimes \mathbf{k}(\K)$ 
      by:
     \begin{equation}\label{Equ:R-boundary-1}
        d_{\upalpha}(t_i) = \sum_{j\in \alpha_i} \mathrm{v}_j ,\ 1\leq i \leq r,\
           \ \text{and}
      \ \ d_{\upalpha}(\mathrm{v}_j)=0,\ 1\leq j \leq m. 
     \end{equation}                 
   We can
  compute $\mathrm{Tor}^{\mathbf{k}[\mathrm{u}_1,\cdots,\mathrm{u}_r]}(\mathbf{k}(\K),\mathbf{k})
  \cong \mathrm{Tor}^{\mathbf{k}[\mathrm{u}_1,\cdots, \mathrm{u}_r]}(\mathbf{k}, \mathbf{k}(\K))$ 
  via the Koszul resolution of $\mathbf{k}$ (see~\cite[Sec 3.4]{BP02}).
  Recall that    
     the \emph{Koszul resolution} of $\mathbf{k}$ is:
 \[ 0\longrightarrow  \Lambda^r_{\mathbf{k}}[t_1,\cdots, t_r]\otimes\mathbf{k}[\mathrm{u}_1,\cdots, \mathrm{u}_r]
 \longrightarrow \cdots \qquad\qquad\qquad\qquad\qquad\qquad
  \]
  \[\qquad\qquad\qquad \ \longrightarrow \Lambda^1_{\mathbf{k}}[t_1,\cdots, t_r]\otimes\mathbf{k}[\mathrm{u}_1,\cdots, \mathrm{u}_r]
  \longrightarrow \mathbf{k}[\mathrm{u}_1,\cdots, \mathrm{u}_r] \longrightarrow \mathbf{k} \longrightarrow 0, \]
  where $d (t_i)=\mathrm{u}_i$ and $d (\mathrm{u}_i)=0$ for any $1\leq i \leq r$.
  Then we obtain
   \begin{align*}
   \mathrm{Tor}^{\mathbf{k}[\mathrm{u}_1,\cdots, \mathrm{u}_r]}(\mathbf{k}, \mathbf{k}(\K))
     &\cong 
       H^*\big( \big(\Lambda_{\mathbf{k}}[t_1,\cdots, t_r]\otimes\mathbf{k}[\mathrm{u}_1,\cdots, \mathrm{u}_r]\big)
      \otimes_{\mathbf{k}[\mathrm{u}_1,\cdots, \mathrm{u}_r]}  \mathbf{k}(\K) \big)\\
     & \cong H^*\big(\Lambda_{\mathbf{k}}[t_1,\cdots, t_r]\otimes \mathbf{k}(\K) \big).
   \end{align*}
  It is easy to see that the
    differential on $\Lambda_{\mathbf{k}}[t_1,\cdots, t_r]\otimes \mathbf{k}(\K)$  
  induced from 
  $\mathrm{Tor}^{\mathbf{k}[\mathrm{u}_1,\cdots, \mathrm{u}_r]}(\mathbf{k}, \mathbf{k}(\K))$ 
  is exactly given by $d_{\upalpha}$ in~\eqref{Equ:R-boundary-1}.
  \n
  By the definition of $\mathbf{k}[\K]$,     
      any element of $\mathbf{k}[\K]$
  can be uniquely written as a linear combination of monomials of the form $\mathrm{v}_{j_1}^{a_1} \ldots \mathrm{v}_{j_s}^{a_s}$ where 
  $\{j_1,  \ldots j_s\} =\sigma$ is a simplex in $\K$ and 
  $a_l\geq 1$ for all $l=1,\cdots, s$. 
  For brevity, we denote
  the monomial $\mathrm{v}_{j_1}^{a_1} \ldots \mathrm{v}_{j_s}^{a_s}$ by $\mathbf{v}^{(\sigma,\mathbf{h})}$ where $\mathbf{h}: [m]\rightarrow \N$ is a weight 
  with supp$(\mathbf{h}) = \sigma$ defined by: 
  $$\mathbf{h}(j) = 
     \begin{cases}
     a_l ,  &  \text{if $j=j_l \in \sigma$  }; \\
      0 ,  &  \text{if $j\notin \sigma$}.
 \end{cases}
 $$ 
 Comparing this notation with~\eqref{Equ:Notation-V}, we see that
 $ \mathbf{v}^{\sigma} = \mathbf{v}^{(\sigma,\mathbf{1}_{\sigma})}$ for any
 $\sigma\in\K$.
 It is clear that $\Lambda_{\mathbf{k}}[t_1,\cdots, t_r]\otimes \mathbf{k}(\K)$ is generated, as a $\mathbf{k}$-module, 
     by the set
    $$\{ t_{\mathrm{I}}\mathbf{v}^{(\sigma,\mathbf{h})}\,|\,
  \sigma\in\K,\, \text{supp}(\mathbf{h})=\sigma,\, \mathrm{I}\subseteq [r]\},$$ 
   where $t_{\mathrm{I}} = t_{i_1}\cdots t_{i_s}$,
        $\mathrm{I}=\{ i_1,\cdots, i_s\} \subseteq [r]$ with $i_1 < \cdots < i_s$. 
\n
  
  In addition, we define a multigrading $\mathrm{mdeg}^{\upalpha}$ on $\Lambda_{\mathbf{k}}[t_1,\cdots, t_r]\otimes \mathbf{k}(\K)$ by:
 \begin{align*}
    \mathrm{mdeg}^{\upalpha}(t_i) &= (-1, \{i\}) \in (\Z, [r]),\  i \in [r];\\
   \mathrm{mdeg}^{\upalpha}(\mathbf{v}^{\sigma})&=
       (0 ,\mathrm{I}_{\upalpha}(\sigma)) \in (\Z,[r]),\ \sigma\in\K.
  \end{align*}
 So $\mathrm{mdeg}^{\upalpha}(t_{\mathrm{I}}\mathbf{v}^{(\sigma,\mathbf{h})}) =(-|\mathrm{I}|,  \mathrm{I}_{\upalpha}(\sigma)\cup \mathrm{I})$. By the definition
 of $d_{\upalpha}$ in~\eqref{Equ:R-boundary-1}, we have
 \begin{align} \label{Equ:Boundary-3}
  d_{\upalpha}(t_{\mathrm{I}}\mathbf{v}^{(\sigma,\mathbf{h})}) &=
   \sum_{i\in \mathrm{I}} 
           \kappa(i,\mathrm{I})\cdot t_{\mathrm{I}\backslash\{i\}} 
          \Big( \sum_{j\in \alpha_i} \mathrm{v}_{j} 
           \mathbf{v}^{(\sigma,\mathbf{h})} \Big) = \sum_{i\in \mathrm{I}\cap\mathrm{I}_{\upalpha}(\sigma)} \kappa(i,\mathrm{I})\cdot t_{\mathrm{I}\backslash\{i\}} \mathbf{v}^{(\sigma,\mathbf{h}+ \bm{\delta}_{\sigma_{\{i\}}})} \notag\\
  & \qquad\qquad\qquad\qquad +  \sum_{i\in \mathrm{I}\backslash \mathrm{I}_{\upalpha}(\sigma)} 
  \underset{|\omega|=|\sigma|+1} 
            {\sum_{\sigma\subset \omega\in\K,\, \omega\backslash \sigma \in \alpha_i}}   
  \kappa(i,\mathrm{I}) \cdot t_{\mathrm{I}\backslash\{i\}} \mathbf{v}^{(\omega,\mathbf{h}+\bm{\delta}_{\omega\backslash \sigma})}.  
 \end{align}  
 For any subset $\mathrm{L}\subseteq [r]$,
define $(\Lambda_{\mathbf{k}}[t_1,\cdots, t_r]\otimes \mathbf{k}(\K))^{\mathrm{L}}$ to be the 
  $\mathbf{k}$-submodule of $\Lambda_{\mathbf{k}}[t_1,\cdots, t_r]\otimes \mathbf{k}(\K)$ generated by 
   $\{ t_{\mathrm{I}}\mathbf{v}^{(\sigma,\mathbf{h})}\,|\,
  \sigma\in\K,\, \text{supp}(\mathbf{h})=\sigma,\,
   \mathrm{I}_{\upalpha}(\sigma)\cup \mathrm{I} =\mathrm{L}\}$.
  By the formula~\eqref{Equ:Boundary-3},
  $d_{\upalpha}((\Lambda_{\mathbf{k}}[t_1,\cdots, t_r]\otimes \mathbf{k}(\K))^{\mathrm{L}}) \subseteq
   (\Lambda_{\mathbf{k}}[t_1,\cdots, t_r]\otimes \mathbf{k}(\K))^{\mathrm{L}}$.
  So we have a decomposition of 
  $\Lambda_{\mathbf{k}}[t_1,\cdots, t_r]\otimes \mathbf{k}(\K)$ into differential $\mathbf{k}$-modules:
  \[  \Lambda_{\mathbf{k}}[t_1,\cdots, t_r]\otimes \mathbf{k}(\K) 
   = \bigoplus_{\mathrm{L}\subseteq [r]} 
    (\Lambda_{\mathbf{k}}[t_1,\cdots, t_r]\otimes \mathbf{k}(\K))^{\mathrm{L}}.\] 
So we can define a multigraded structure
on $\mathrm{Tor}^{\mathbf{k}[\mathrm{u}_1,\cdots, \mathrm{u}_r]}(\mathbf{k}, \mathbf{k}(\K))$ by:
  \begin{equation} \label{Equ:Multi-Defi-0}
  \mathrm{Tor}^{\mathbf{k}[\mathrm{u}_1,\cdots, \mathrm{u}_r]}(\mathbf{k}, \mathbf{k}(\K)) = \bigoplus_{\mathrm{L}\subseteq [r]} \mathrm{Tor}_{*,\mathrm{L}}^{\mathbf{k}[\mathrm{u}_1,\cdots, \mathrm{u}_r]}(\mathbf{k}, \mathbf{k}(\K)), \ \text{where} \ \qquad
   \end{equation}
  \begin{equation} \label{Equ:Multi-Defi}
   \mathrm{Tor}_{q,\mathrm{L}}^{\mathbf{k}[\mathrm{u}_1,\cdots, \mathrm{u}_r]}(\mathbf{k}, \mathbf{k}(\K)) := H^{-q}\big(  (\Lambda_{\mathbf{k}}[t_1,\cdots, t_r]\otimes \mathbf{k}(\K))^{\mathrm{L}}\big), 
   \ \forall q\geq 0 . 
    \end{equation}
  
 Next, we define a $\mathbf{k}$-linear map
 \begin{align} \label{Equ:Psi-alpha}
   \psi_{\upalpha}: \Lambda_{\mathbf{k}}[t_1,\cdots, t_r]\otimes \mathbf{k}(\K) &\longrightarrow C^*(\mathcal{X}(\K, \upalpha);\mathbf{k}) \\
 t_{\mathrm{I}}\mathbf{v}^{(\sigma,\mathbf{h})} \ &\longmapsto\ \mathbf{B}^*_{(\sigma, \mathbf{h}, \mathrm{I})} \notag
 \end{align}
 Obviously, $\psi_{\upalpha}$ is a $\mathbf{k}$-linear
 isomorphism. Moreover, by comparing~\eqref{Equ:Boundary-2} with~\eqref{Equ:Boundary-3}, we see that $\psi_{\upalpha}$ is
 a cochain map and hence an isomorphism of cochain complexes. In addition, $ \psi_{\upalpha}$ clearly preserves the multigradings of $\Lambda_{\mathbf{k}}[t_1,\cdots, t_r]\otimes \mathbf{k}(\K)$ and $C^*(\mathcal{X}(\K, \upalpha);\mathbf{k})$. So we obtain
 the following lemma.\n 
 
    \begin{lem} \label{Lem:Quotient-Isom}
    For any $\mathrm{L}\subseteq [r]$, 
    $\psi_{\upalpha}: (\Lambda_{\mathbf{k}}[t_1,\cdots, t_r]\otimes \mathbf{k}(\K))^{\mathrm{L}} \rightarrow C^{*,\mathrm{L}}(\mathcal{X}(\K, \upalpha);\mathbf{k})$ induces an isomorphism in cohomology.    
    \end{lem}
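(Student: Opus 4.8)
The plan is to deduce the lemma from the facts, already established just above, that $\psi_{\upalpha}$ is a $\mathbf{k}$-linear isomorphism of cochain complexes which moreover preserves the multidegree $\mathrm{mdeg}^{\upalpha}$; everything then reduces to the bookkeeping observation that a multidegree-preserving cochain isomorphism restricts to an isomorphism on each multigraded summand.

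First I would unwind the two $\mathrm{L}$-graded submodules. By definition $(\Lambda_{\mathbf{k}}[t_1,\cdots,t_r]\otimes\mathbf{k}(\K))^{\mathrm{L}}$ is the $\mathbf{k}$-span of the monomials $t_{\mathrm{I}}\mathbf{v}^{(\sigma,\mathbf{h})}$ with $\sigma\in\K$, $\mathrm{supp}(\mathbf{h})=\sigma$ and $\mathrm{I}_{\upalpha}(\sigma)\cup\mathrm{I}=\mathrm{L}$, while $C^{*,\mathrm{L}}(\mathcal{X}(\K,\upalpha);\mathbf{k})$ is the $\mathbf{k}$-span of the dual cells $\mathbf{B}^*_{(\sigma,\mathbf{h},\mathrm{I})}$ subject to exactly the same condition $\mathrm{I}_{\upalpha}(\sigma)\cup\mathrm{I}=\mathrm{L}$. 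Since $\psi_{\upalpha}$ sends $t_{\mathrm{I}}\mathbf{v}^{(\sigma,\mathbf{h})}$ to $\mathbf{B}^*_{(\sigma,\mathbf{h},\mathrm{I})}$ by~\eqref{Equ:Psi-alpha}, it matches these two spanning sets bijectively, so $\psi_{\upalpha}$ restricts to a $\mathbf{k}$-linear isomorphism $(\Lambda_{\mathbf{k}}[t_1,\cdots,t_r]\otimes\mathbf{k}(\K))^{\mathrm{L}}\cong C^{*,\mathrm{L}}(\mathcal{X}(\K,\upalpha);\mathbf{k})$; this is just the $[r]$-component of the already observed compatibility with $\mathrm{mdeg}^{\upalpha}$.

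Next I would record that the differentials on both sides preserve these submodules — this is exactly what makes the decompositions indexed by $\mathrm{L}$ decompositions \emph{of cochain complexes}, and it follows from~\eqref{Equ:Boundary-3} on the algebraic side and from~\eqref{Equ:Boundary-2} on the cellular side. Combining the two steps, the restriction of $\psi_{\upalpha}$ is a bijective cochain map, hence an isomorphism of cochain complexes, and therefore induces an isomorphism in cohomology; together with~\eqref{Equ:Multi-Defi} this identifies $\mathrm{Tor}^{\mathbf{k}[\mathrm{u}_1,\cdots,\mathrm{u}_r]}_{*,\mathrm{L}}(\mathbf{k},\mathbf{k}(\K))$ with $H^{*}(C^{*,\mathrm{L}}(\mathcal{X}(\K,\upalpha);\mathbf{k}))$, which is the identification the proof of Theorem~\ref{Thm:Main-1} will rely on.

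There is essentially no serious obstacle here, since the substantive verification was the chain-map property of $\psi_{\upalpha}$ carried out earlier; the only points requiring a moment's care are (i) confirming that the two families of subcomplexes are cut out by the identical index condition $\mathrm{I}_{\upalpha}(\sigma)\cup\mathrm{I}=\mathrm{L}$ — in particular, unlike the subcomplexes $C^{*,\mathrm{L}}(\mathcal{Z}_{\K}\slash S_{\upalpha};\mathbf{k})$, no disjointness of $\mathrm{I}_{\upalpha}(\sigma)$ and $\mathrm{I}$ is imposed here on either side — and (ii) that the sign factors $\kappa(i,\mathrm{I})$ appearing in~\eqref{Equ:Boundary-2} and~\eqref{Equ:Boundary-3} are literally the same, so that $\psi_{\upalpha}$ is an honest cochain map rather than one commuting with the differentials only up to sign.
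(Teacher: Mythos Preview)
Your proposal is correct and follows exactly the paper's own argument: the paper observes just before the lemma that $\psi_{\upalpha}$ is a $\mathbf{k}$-linear isomorphism, a cochain map (by comparing~\eqref{Equ:Boundary-2} and~\eqref{Equ:Boundary-3}), and preserves the multigrading, and states the lemma as an immediate consequence. Your write-up simply unpacks these three points in slightly more detail, including the helpful remark that no disjointness condition on $\mathrm{I}_{\upalpha}(\sigma)$ and $\mathrm{I}$ is imposed on either side.
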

    
    So for each $\mathrm{L}\subseteq [r]$,
    we have a diagram:
 \begin{equation} \label{Diagram-2}
          \xymatrix{
          (\Lambda_{\mathbf{k}}[t_1,\cdots, t_r]\otimes
        \mathbf{k}(\K))^{\mathrm{L}}
                \ar[r]^{\quad\ \, \psi_{\upalpha}} &
            C^{*,\mathrm{L}}(\mathcal{X}(\K, \upalpha);\mathbf{k})
            \ar[d]^{\iota^{*}_{\upalpha}} \\
         &  C^{*,\mathrm{L}}(\mathcal{Z}_{\K}\slash S_{\upalpha};\mathbf{k})
        } 
     \end{equation}
     where $\iota^{*}_{\upalpha}$ is the cochain map induced by
   the inclusion $\iota_{\upalpha}: \mathcal{Z}_{\K}\slash S_{\upalpha} \hookrightarrow \mathcal{X}(\K, \upalpha)$. So 
   \begin{equation} \label{Equ:Iota-star}
   \iota^{*}_{\upalpha}(\mathbf{B}^*_{(\sigma, \mathbf{h}, \mathrm{I})}) = \begin{cases}
    B^*_{(\sigma,\mathrm{I})}, & \text{if $\mathrm{I}\cap \mathrm{I}_{\upalpha}(\sigma) = \varnothing$ and $\mathbf{h}=\mathbf{1}_{\sigma}$}; \\
     0, & \text{otherwise}.
   \end{cases}
   \end{equation}
 Since $\mathcal{Z}_{\K}\slash S_{\upalpha}$ is a deformation retract of $\mathcal{X}(\K, \upalpha)$,
  $\iota^*_{\upalpha}$ induces an isomorphism in cohomology. So 
  $\iota^{*}_{\upalpha}\circ \psi_{\upalpha} : 
  (\Lambda_{\mathbf{k}}[t_1,\cdots, t_r]\otimes
        \mathbf{k}(\K))^{\mathrm{L}} \rightarrow 
        C^{*,\mathrm{L}}(\mathcal{Z}_{\K}\slash S_{\upalpha};\mathbf{k})$ induces an isomorphism in cohomology. More specifically, we have
        \begin{equation}
          \iota^{*}_{\upalpha}\circ \psi_{\upalpha}(t_{\mathrm{I}}\mathbf{v}^{(\sigma,\mathbf{h})})=
          \begin{cases}
    B^*_{(\sigma,\mathrm{I})}, & \text{if $\mathrm{I}\cap \mathrm{I}_{\upalpha}(\sigma) = \varnothing$ and $\mathbf{h}=\mathbf{1}_{\sigma}$}; \\
     0, & \text{otherwise}.
   \end{cases}
        \end{equation}
    Notice when $\mathrm{I}\cap \mathrm{I}_{\upalpha}(\sigma) = \varnothing$ and $\mathrm{I}_{\upalpha}(\sigma)\cup \mathrm{I}=\mathrm{L}$,
        $\dim  B^*_{(\sigma,\mathrm{I})} = 2|\I_{\upalpha}(\sigma)|+|\mathrm{I}| = 2|\mathrm{L}|-|\mathrm{I}|$. So for any $q\geq 0$,
        $\iota^{*}_{\upalpha}\circ \psi_{\upalpha}$
        induces an isomorphism 
\begin{equation} \label{Equ:Iso-Graded}
  H^{-q}( (\Lambda_{\mathbf{k}}[t_1,\cdots, t_r]\otimes
        \mathbf{k}(\K))^{\mathrm{L}}) \overset{\cong}{\longrightarrow} H^{2|\mathrm{L}|-q}(C^{*,\mathrm{L}}(\mathcal{Z}_{\K}\slash S_{\upalpha};\mathbf{k})).
    \end{equation}
    \n
     
   \begin{prop} \label{prop:Cohomology-Isomor}
  If $\upalpha=\{\alpha_1,\cdots, \alpha_r \}$ is a partition of $[m]$
  that is nondegenerate for $\mathcal{K}$, then for any $q\geq 0$ and $\mathrm{L}\subseteq [r]$, 
  $\mathrm{Tor}_{q,\mathrm{L}}^{\mathbf{k}[\mathrm{u}_1,\cdots, \mathrm{u}_r]}(\mathbf{k}, \mathbf{k}(\K))\cong \widetilde{H}^{|\mathrm{L}|-q-1}(\K|_{\omega^{\upalpha}_{\mathrm{L}}};\mathbf{k})
      $.  
 \end{prop}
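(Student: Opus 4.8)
The plan is to reduce the statement, by assembling the isomorphisms already constructed in Sections~2 and~3, to a purely combinatorial identification of the cochain subcomplex $C^{*,\mathrm{L}}(\mathcal{Z}_{\K}\slash S_{\upalpha};\mathbf{k})$ with a degree‑shifted augmented simplicial cochain complex of the full subcomplex $\K|_{\omega^{\upalpha}_{\mathrm{L}}}$. First I would invoke the isomorphism~\eqref{Equ:Iso-Graded}: since $\iota^{*}_{\upalpha}\circ\psi_{\upalpha}$ induces, for every $q\ge 0$ and $\mathrm{L}\subseteq[r]$, an isomorphism $\mathrm{Tor}_{q,\mathrm{L}}^{\mathbf{k}[\mathrm{u}_1,\cdots,\mathrm{u}_r]}(\mathbf{k},\mathbf{k}(\K))=H^{-q}\big((\Lambda_{\mathbf{k}}[t_1,\cdots,t_r]\otimes\mathbf{k}(\K))^{\mathrm{L}}\big)\cong H^{2|\mathrm{L}|-q}\big(C^{*,\mathrm{L}}(\mathcal{Z}_{\K}\slash S_{\upalpha};\mathbf{k})\big)$, everything reduces to proving $H^{n}\big(C^{*,\mathrm{L}}(\mathcal{Z}_{\K}\slash S_{\upalpha};\mathbf{k})\big)\cong\widetilde{H}^{\,n-|\mathrm{L}|-1}(\K|_{\omega^{\upalpha}_{\mathrm{L}}};\mathbf{k})$ for all $n$, after which substituting $n=2|\mathrm{L}|-q$ gives $\widetilde{H}^{\,|\mathrm{L}|-q-1}(\K|_{\omega^{\upalpha}_{\mathrm{L}}};\mathbf{k})$ and finishes the proof.

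For the identification I would first observe that a generator $B^*_{(\sigma,\mathrm{I})}$ of $C^{*,\mathrm{L}}(\mathcal{Z}_{\K}\slash S_{\upalpha};\mathbf{k})$ must satisfy $\mathrm{I}=\mathrm{L}\backslash\mathrm{I}_{\upalpha}(\sigma)$, so these generators are in bijection with the simplices $\sigma$ of $\K|_{\omega^{\upalpha}_{\mathrm{L}}}$ (including $\sigma=\varnothing$), with $B_{(\sigma,\mathrm{L}\backslash\mathrm{I}_{\upalpha}(\sigma))}$ lying in cohomological degree $2|\sigma|+|\mathrm{L}|-|\mathrm{I}_{\upalpha}(\sigma)|=|\sigma|+|\mathrm{L}|$, using $|\mathrm{I}_{\upalpha}(\sigma)|=|\sigma|$ from nondegeneracy. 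A short check, again using nondegeneracy, shows the inner sum in~\eqref{Equ:Boundary-1} runs over precisely the $\omega\in\K|_{\omega^{\upalpha}_{\mathrm{L}}}$ covering $\sigma$; hence, after the degree shift $\sigma\mapsto(\text{simplicial degree }|\sigma|-1=n-|\mathrm{L}|-1)$, the complex looks like the augmented simplicial cochain complex of $\K|_{\omega^{\upalpha}_{\mathrm{L}}}$. What remains is to match signs: fix an ordering of the vertices of $\K|_{\omega^{\upalpha}_{\mathrm{L}}}$ that refines the partial order by $\upalpha$-color (legitimate, since within a single simplex nondegeneracy forbids repeated colors); then the simplicial coboundary inserts $\omega=\sigma\cup\{j\}$ with $j\in\alpha_i$ with sign $(-1)^{c(i,\mathrm{I}_{\upalpha}(\sigma))}$, whereas~\eqref{Equ:Boundary-1} produces $\kappa(i,\mathrm{L}\backslash\mathrm{I}_{\upalpha}(\sigma))=(-1)^{c(i,\mathrm{L})-c(i,\mathrm{I}_{\upalpha}(\sigma))}$. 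The two differ only by the factor $(-1)^{c(i,\mathrm{L})}$, which depends on $i$ alone, so rescaling each basis element by $\eta_\sigma:=\prod_{i\in\mathrm{I}_{\upalpha}(\sigma)}(-1)^{c(i,\mathrm{L})}$ — and noting $\eta_\omega=\eta_\sigma\cdot(-1)^{c(i,\mathrm{L})}$ whenever $\mathrm{I}_{\upalpha}(\omega)=\mathrm{I}_{\upalpha}(\sigma)\cup\{i\}$ — turns $[\sigma]\mapsto\eta_\sigma B^*_{(\sigma,\mathrm{L}\backslash\mathrm{I}_{\upalpha}(\sigma))}$ into an isomorphism of cochain complexes, up to the stated degree shift by $|\mathrm{L}|+1$. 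Taking cohomology gives the required isomorphism. It is worth remarking that this per-$\mathrm{L}$ statement is exactly the cochain-level refinement of~\eqref{Equ:Isom-1}, so one could alternatively quote the proof in~\cite{Yu2019} for this last step instead of redoing the sign analysis.

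The main obstacle I anticipate is precisely this sign bookkeeping — reconciling the cellular coboundary~\eqref{Equ:Boundary-1}, whose signs come from the ambient ordering of the $r$ circle factors $S^1_{\{i\}}$ (encoded by $\kappa$), with the standard simplicial coboundary, whose signs come from the ordering of vertices inside a simplex. Everything else (the bijection of generators, the degree shift, and the fact that the coboundary stays inside $\K|_{\omega^{\upalpha}_{\mathrm{L}}}$) is a direct unwinding of the definitions of Section~2, while the homotopy-theoretic content has already been packaged into Lemma~\ref{Lem:Quotient-Isom}, the deformation retraction $\mathcal{X}(\K,\upalpha)\simeq\mathcal{Z}_{\K}\slash S_{\upalpha}$, and the isomorphism~\eqref{Equ:Iso-Graded}.
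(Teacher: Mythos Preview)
Your proposal is correct and follows essentially the same route as the paper: both invoke~\eqref{Equ:Iso-Graded} to reduce to the identification $H^{n}\big(C^{*,\mathrm{L}}(\mathcal{Z}_{\K}\slash S_{\upalpha};\mathbf{k})\big)\cong\widetilde{H}^{\,n-|\mathrm{L}|-1}(\K|_{\omega^{\upalpha}_{\mathrm{L}}};\mathbf{k})$. The only difference is that the paper simply cites the proof of~\cite[Theorem~1.2]{Yu2019} for this last step, whereas you supply the self-contained bijection-plus-sign-rescaling argument (an alternative you yourself flag at the end).
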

 \begin{proof}
  By definition~\eqref{Equ:Multi-Defi}, $
  \mathrm{Tor}_{q,\mathrm{L}}^{\mathbf{k}[\mathrm{u}_1,\cdots, \mathrm{u}_r]}(\mathbf{k}, \mathbf{k}(\K)) =
   H^{-q}\big(  (\Lambda_{\mathbf{k}}[t_1,\cdots, t_r]\otimes \mathbf{k}(\K))^{\mathrm{L}}\big)$. So by~\eqref{Equ:Iso-Graded}, $
  \mathrm{Tor}_{q,\mathrm{L}}^{\mathbf{k}[\mathrm{u}_1,\cdots, \mathrm{u}_r]}(\mathbf{k}, \mathbf{k}(\K))\cong H^{2|\mathrm{L}|-q}(C^{*,\mathrm{L}}(\mathcal{Z}_{\K}\slash S_{\upalpha};\mathbf{k}))$.
 Moreover, it is shown in the proof of~\cite[Theorem 1.2]{Yu2019} 
 that 
 $$H^q(C^{*,\mathrm{L}}(\mathcal{Z}_{\K}\slash S_{\upalpha};\mathbf{k}))\cong
 \widetilde{H}^{q-|\mathrm{L}|-1}(\K|_{\omega^{\upalpha}_{\mathrm{L}}};\mathbf{k}).$$
        So we obtain
   $\mathrm{Tor}_{q,\mathrm{L}}^{\mathbf{k}[\mathrm{u}_1,\cdots, \mathrm{u}_r]}(\mathbf{k}, \mathbf{k}(\K))\cong\widetilde{H}^{|\mathrm{L}|-q-1}(\K|_{\omega^{\upalpha}_{\mathrm{L}}};\mathbf{k})
      $. We want to remind the reader that
     in~\cite{Yu2019}, the space $\mathcal{Z}_{\K}\slash S_{\upalpha}$ is denoted by $X(\mathcal{K},\lambda_{\upalpha})$ and the full subcomplex $\K|_{\omega^{\upalpha}_{\mathrm{L}}}$ is denoted by $\mathcal{K}_{\upalpha,\mathrm{L}}$.
 \end{proof}

    \n
    Finally, let us give a proof of Theorem~\ref{Thm:Main-1}. 
  Our proof will use the following theorem in~\cite{Cha91} on 
  the Betti numbers of
    any multigraded module over a polynomial ring, which
    generalizes~\cite[Corollary 2.5]{EvanGriff88}. 
   \n
    
     \begin{thm}[Theorem 3 in~\cite{Cha91}] \label{Thm:Cha}
     Let $M$ be a multigraded $\mathbf{k}[\mathrm{u}_1,\cdots,\mathrm{u}_r]$-module of Krull dimension $s$. Then
     $\dim_{\mathbf{k}} \mathrm{Tor}^{\mathbf{k}[\mathrm{u}_1,\cdots,\mathrm{u}_r]}_{i}(\mathbf{k};M)\geq \binom{r-s}{i}$ for every $i\geq 0$. 
    \end{thm}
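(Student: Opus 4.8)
The plan is to translate the statement into one about Koszul homology and then argue by induction on the number of variables $r$. Write $R=\mathbf{k}[\mathrm{u}_1,\dots,\mathrm{u}_r]$, $R'=\mathbf{k}[\mathrm{u}_1,\dots,\mathrm{u}_{r-1}]$, $\underline{\mathrm{u}}=(\mathrm{u}_1,\dots,\mathrm{u}_{r-1})$, and take $M$ finitely generated and multigraded (i.e. $\Z^r$‑graded with $\deg\mathrm{u}_j=e_j$). Since $\mathbf{k}$ is resolved over $R$ by the Koszul complex on $\mathrm{u}_1,\dots,\mathrm{u}_r$, one has $\dim_{\mathbf{k}}\mathrm{Tor}^R_i(\mathbf{k};M)=\dim_{\mathbf{k}}H_i(\mathrm{u}_1,\dots,\mathrm{u}_r;M)$. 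Comparing this Koszul complex with the one on $\underline{\mathrm{u}}$ gives the standard mapping‑cone long exact sequence
\[
H_i(\underline{\mathrm{u}};M)\xrightarrow{\ \pm\mathrm{u}_r\ }H_i(\underline{\mathrm{u}};M)\longrightarrow H_i(\mathrm{u}_1,\dots,\mathrm{u}_r;M)\longrightarrow H_{i-1}(\underline{\mathrm{u}};M)\xrightarrow{\ \pm\mathrm{u}_r\ }H_{i-1}(\underline{\mathrm{u}};M),
\]
so, writing $N_i:=\mathrm{Tor}^{R'}_i(\mathbf{k};M)=H_i(\underline{\mathrm{u}};M)$, which is a finitely generated $\Z$‑graded module over $\mathbf{k}[\mathrm{u}_r]$, splicing the sequence yields
\[
\dim_{\mathbf{k}}\mathrm{Tor}^R_i(\mathbf{k};M)=\dim_{\mathbf{k}}\bigl(N_i/\mathrm{u}_rN_i\bigr)+\dim_{\mathbf{k}}\ker\bigl(\mathrm{u}_r\colon N_{i-1}\to N_{i-1}\bigr)\ \ge\ \dim_{\mathbf{k}}\bigl(N_i/\mathrm{u}_rN_i\bigr),
\]
and $\dim_{\mathbf{k}}(N_i/\mathrm{u}_rN_i)$ is at least the rank of the free part of the $\mathbf{k}[\mathrm{u}_r]$‑module $N_i$.

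For the inductive step, suppose $s:=\dim M\ge 1$. Since $M$ is multigraded, its minimal primes are monomial, of the form $(\mathrm{u}_j:j\in S)$, and those of maximal dimension $s$ have $|S|=r-s<r$; hence the variables lying in all such primes do not exhaust $\{\mathrm{u}_1,\dots,\mathrm{u}_r\}$, so after relabelling we may assume $\mathrm{u}_r$ lies outside at least one minimal prime of $M$ of dimension $s$. Put $M':=M/(0:_M\mathrm{u}_r^\infty)$. Then $\mathrm{u}_r$ is a nonzerodivisor on $M'$, and $\dim M'=s$ because that chosen minimal prime (not containing $\mathrm{u}_r$) still lies in the support of $M'$. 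Consequently the homogeneous components $M'_k$ of $M'$ in the $\mathrm{u}_r$‑grading are, for $k\gg 0$, all isomorphic as $R'$‑modules to the finitely generated multigraded $R'$‑module $M'':=(M'[\mathrm{u}_r^{-1}])_{0}$, whose Krull dimension is $s-1$; correspondingly the free rank of $N_i$ over $\mathbf{k}[\mathrm{u}_r]$ equals $\dim_{\mathbf{k}}\mathrm{Tor}^{R'}_i(\mathbf{k};M'')$. Combining this with the inequality above and the inductive hypothesis applied over the $(r-1)$‑variable ring $R'$ gives
\[
\dim_{\mathbf{k}}\mathrm{Tor}^R_i(\mathbf{k};M)\ \ge\ \dim_{\mathbf{k}}\mathrm{Tor}^{R'}_i(\mathbf{k};M'')\ \ge\ \binom{(r-1)-(s-1)}{i}=\binom{r-s}{i},
\]
as desired.

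This leaves the base case $s=0$: a nonzero finite‑length multigraded module $M$ (the case $r=0$ being trivial), for which one must show $\dim_{\mathbf{k}}\mathrm{Tor}^R_i(\mathbf{k};M)\ge\binom{r}{i}$ — the multigraded Buchsbaum–Eisenbud–Horrocks estimate. This is the heart of the theorem and the step I expect to be the main obstacle. The approach is that the minimal multigraded free resolution of a finite‑length module has length exactly $r$, with generator multidegrees increasing strictly in each of the $r$ coordinate directions; using finiteness of length (so that a large power of every $\mathrm{u}_j$ annihilates $M$) together with the socle of $M$, one aims to produce, for each $i$‑element subset $S\subseteq[r]$, a non‑bounding cycle in homological degree $i$ of the Koszul complex $K(\mathrm{u}_1,\dots,\mathrm{u}_r;M)$ whose multidegree records $S$, so that the resulting $\binom{r}{i}$ classes in $H_i(\mathrm{u}_1,\dots,\mathrm{u}_r;M)$ occupy pairwise distinct multidegrees and are therefore $\mathbf{k}$‑linearly independent. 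It is exactly at this point that the multigrading is indispensable — the analogous bound for arbitrary finite‑length modules of finite projective dimension is the still‑open Buchsbaum–Eisenbud–Horrocks conjecture — and the delicate part, which is where the argument sharpens \cite[Corollary 2.5]{EvanGriff88}, is the multidegree bookkeeping that guarantees the chosen Koszul cycles are simultaneously nonzero in homology. (Two routine verifications remain in the inductive step: the stabilisation of $M'$ to $M''$ in high $\mathrm{u}_r$‑degree, and the identification of the free rank of $N_i$ over $\mathbf{k}[\mathrm{u}_r]$ with $\dim_{\mathbf{k}}\mathrm{Tor}^{R'}_i(\mathbf{k};M'')$.)
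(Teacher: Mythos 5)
The paper does not actually prove Theorem~\ref{Thm:Cha}: it is quoted verbatim from Charalambous~\cite{Cha91} and used as a black box in the proof of Theorem~\ref{Thm:Main-1}, so there is no in-paper argument to compare yours against; your proposal has to be judged on its own. Your inductive step is essentially sound: the Koszul long exact sequence and the inequality $\dim_{\mathbf{k}}\mathrm{Tor}^R_i(\mathbf{k};M)\ge\dim_{\mathbf{k}}(N_i/\mathrm{u}_rN_i)\ge\mathrm{rank}_{\mathbf{k}[\mathrm{u}_r]}N_i$ are standard; the minimal primes of a multigraded module are indeed of the form $(\mathrm{u}_j: j\in S)$ with $|S|=r-s<r$ when $s\ge1$, so a variable $\mathrm{u}_r$ avoiding one of them exists; and the two verifications you defer are genuinely routine (finite generation forces $\mathrm{u}_r\colon M'_k\to M'_{k+1}$ to be an isomorphism of $R'$-modules for $k\gg0$, whence $N_i[\mathrm{u}_r^{-1}]\cong H_i(\underline{\mathrm{u}};M'')\otimes_{\mathbf{k}}\mathbf{k}[\mathrm{u}_r^{\pm1}]$ and the rank identification follows).

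The genuine gap is exactly where you locate it: the base case $s=0$, i.e.\ the claim that a nonzero finite-length multigraded module satisfies $\dim_{\mathbf{k}}\mathrm{Tor}^R_i(\mathbf{k};M)\ge\binom{r}{i}$. This is precisely \cite[Corollary 2.5]{EvanGriff88}, and it carries all the content of the theorem in dimension zero (for arbitrary finite-length modules the analogous bound is the open Buchsbaum--Eisenbud--Horrocks conjecture, so no soft argument can work). What you offer for it is only a hoped-for construction --- ``produce, for each $i$-element subset $S\subseteq[r]$, a non-bounding Koszul cycle whose multidegree records $S$'' --- with no argument that such cycles exist or that they survive in homology; exhibiting cycles in $\binom{r}{i}$ distinct multidegrees is easy, but showing they are not boundaries is the whole difficulty. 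As written, your proposal is therefore a correct reduction of Charalambous's theorem to the Evans--Griffith bound, not a proof of it. To make it complete you should either invoke \cite[Corollary 2.5]{EvanGriff88} explicitly as a known lemma (legitimate, since the paper already treats that result as established) or actually reproduce its proof.
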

    
     The Krull dimension $\mathrm{Kd}(R)$ of a commutative ring $R$ is the maximal number of 
      algebraically independent elements of $R$.
    The \emph{Krull dimension} $ \mathrm{Kd}_{\mathbf{k}[\mathrm{u}_1,\cdots,\mathrm{u}_r]}(M)$ of a $\mathbf{k}[\mathrm{u}_1,\cdots,\mathrm{u}_r]$-module $M$ is defined to be 
     the Krull dimension of the quotient ring of $\mathbf{k}[\mathrm{u}_1,\cdots,\mathrm{u}_r]$ which makes $M$ a faithful module. That is,
     $$ \mathrm{Kd}_{\mathbf{k}[\mathrm{u}_1,\cdots,\mathrm{u}_r]}(M) := \mathrm{Kd}\big(\mathbf{k}[\mathrm{u}_1,\cdots,\mathrm{u}_r]\slash \mathrm{Ann}(M)\big) $$
     where $\mathrm{Ann}(M)$, the \emph{annihilator},
      is the kernel of the natural map from
      $\mathbf{k}[\mathrm{u}_1,\cdots,\mathrm{u}_r]$
      to the ring of  $\mathbf{k}[\mathrm{u}_1,\cdots,\mathrm{u}_r]$-linear endomorphisms of $M$.
      \n
  Suppose
   a partition $\upalpha =\{\alpha_1,\cdots,\alpha_r\}$ 
          of $[m]$ is nondegenerate for a simplicial complex $\mathcal{K}$ on $[m]$.
     Let $\{ j_0,\cdots, j_d\}\subseteq [m]$ be a maximal simplex of $\mathcal{K}$ where $d=\dim(\mathcal{K})$ and assume that $j_l\in \alpha_{i_l}$, $0\leq l \leq d$.  
     Then with respect to the $\mathbf{k}[\mathrm{u}_1,\cdots,\mathrm{u}_r]$-module structure
     on $\mathbf{k}(\mathcal{K})$ defined in~\eqref{Equ:Mod-Struc}, $u_{j_0},\cdots,u_{j_d}$ are $d+1$ algebraically independent elements of $\mathbf{k}[\mathrm{u}_1,\cdots,\mathrm{u}_r]\slash \mathrm{Ann}(\mathbf{k}(\mathcal{K}))$. So we have
     $\mathrm{Kd}\big(\mathbf{k}[\mathrm{u}_1,\cdots,\mathrm{u}_r]\slash \mathrm{Ann}(\mathbf{k}(\mathcal{K}))\big) \geq d+1$. Conversely, it is obvious that any monomial 
     $u^{a_1}_{j_1}\cdots u^{a_s}_{j_s}$ in $\mathbf{k}[\mathrm{u}_1,\cdots,\mathrm{u}_r]$ with $s>d+1$
     acts trivially on $\mathbf{k}(\mathcal{K})$.
     So $\mathrm{Kd}\big(\mathbf{k}[\mathrm{u}_1,\cdots,\mathrm{u}_r]\slash \mathrm{Ann}(\mathbf{k}(\mathcal{K}))\big)\leq d+1$. So we obtain
    \begin{equation} \label{Equ:Krull-Dim}
      \mathrm{Kd}_{\mathbf{k}[\mathrm{u}_1,\cdots,\mathrm{u}_r]}(\mathbf{k}(\mathcal{K}))=\dim(\mathcal{K})+1.
      \end{equation}

    \nn
    
  \noindent \textbf{\textit{Proof of Theorem~\ref{Thm:Main-1}}}:\n
   
   By Proposition~\ref{prop:Cohomology-Isomor} and the Hochster's formula~\eqref{Equ:Hochster}, we have:
       \[   
        \dim_{\mathbf{k}} \mathrm{Tor}_{q,\mathrm{L}}^{\mathbf{k}[\mathrm{u}_1,\cdots, \mathrm{u}_r]}(\mathbf{k}, \mathbf{k}(\K)) = \dim_{\mathbf{k}} 
        \widetilde{H}^{|\mathrm{L}|-q-1}(\K|_{\omega^{\upalpha}_{\mathrm{L}}};\mathbf{k}) 
         = \beta^{\mathbf{k}(\mathcal{K})}_{q+|\omega^{\upalpha}_{\mathrm{L}}|-|\mathrm{L}|,
        \omega^{\upalpha}_{\mathrm{L}}}, \ \forall q\geq 0.  \]
    Then since
   the Krull dimension of $\mathbf{k}(\mathcal{K})$
   with respect to the $\mathbf{k}[\mathrm{u}_1,\cdots,\mathrm{u}_r]$-module structure in~\eqref{Equ:Mod-Struc} is equal to
   $\dim(\mathcal{K})+1$, we obtain the following inequality from Theorem~\ref{Thm:Cha}
    for any $q\geq 0$:
   \begin{align*}
     \sum_{\mathrm{L}\subseteq [r]} \beta^{\mathbf{k}(\mathcal{K})}_{q+|\omega^{\upalpha}_{\mathrm{L}}|-|\mathrm{L}|,\omega^{\upalpha}_{\mathrm{L}}} &=
        \sum_{\mathrm{L}\subseteq [r]} \dim_{\mathbf{k}} \mathrm{Tor}_{q,\mathrm{L}}^{\mathbf{k}[\mathrm{u}_1,\cdots, \mathrm{u}_r]}(\mathbf{k}, \mathbf{k}(\K))  \\
        & \overset{\eqref{Equ:Multi-Defi-0}}{=}  \dim_{\mathbf{k}}\mathrm{Tor}_{q}^{\mathbf{k}[\mathrm{u}_1,\cdots, \mathrm{u}_r]}(\mathbf{k}, \mathbf{k}(\K)) \geq \binom{r-\dim(\mathcal{K})-1}{q}.
      \end{align*}  
    Then we have
    \[ \sum_{q\geq 0} \sum_{\mathrm{L}\subseteq [r]} \dim_{\mathbf{k}} 
        \widetilde{H}^{|\mathrm{L}|-q-1}(\K|_{\omega^{\upalpha}_{\mathrm{L}}};\mathbf{k}) = \sum_{q\geq 0} \sum_{\mathrm{L}\subseteq [r]} \beta^{\mathbf{k}(\mathcal{K})}_{q+|\omega^{\upalpha}_{\mathrm{L}}|-|\mathrm{L}|,\omega^{\upalpha}_{\mathrm{L}}} \geq 2^{r-\dim(\mathcal{K})-1}.\]
        This finishes the proof of the theorem.
    \qed
    
    \begin{rem}
     It is not clear whether 
     Theorem~\ref{Thm:Main-1} should hold for all partitions of $[m]$. Indeed, if a partition $\upalpha$ of $[m]$ is not nondegenerate for $\mathcal{K}$, 
      there is no natural way to identify
      $ H^*(\mathcal{Z}_{\mathcal{K}}\slash S_{\upalpha};\mathbf{k}) $ with the Tor algebra of any 
    multigraded
     $\mathbf{k}[\mathrm{u}_1,\cdots, \mathrm{u}_r]$-module. However, we have not found any counterexample
     to the statement of
      Theorem~\ref{Thm:Main-1} among general 
      partitions of $[m]$ either.
    \end{rem}

\vskip .4cm  

 \section*{Acknowledgements}
  The author is partially supported by
 Natural Science Foundation of China (grant No.11871266) and 
 the PAPD (Priority Academic Program Development) of Jiangsu Higher Education Institutions.
 
 \vskip .6cm

\end{document}